\newtheorem{proposition}{Proposition}
\newtheorem*{proposition1'}{Proposition 1'}
\newtheorem{lemma}{Lemma}
\theoremstyle{definition}
\newtheorem{definition}{Definition}
\newtheorem{theorem}{Theorem}
\newtheorem*{theoremA}{Theorem A}
\newtheorem*{theoremB}{Theorem B}
\newtheorem*{theoremC}{Theorem C}
\newtheorem*{theoremD}{Theorem D}
\newtheorem{example}{Example}
\theoremstyle{remark}
\newtheorem{remark}{Remark}
\newcommand{\R}{\mathbb{R}}
\newcommand{\eps}{\varepsilon}
\newcommand{\norm}[1]{\left\|#1\right\|}
\title{pinned patterns and density theorems in $\mathbb R^d$}
\author{Chenjian Wang}
\date{}
\begin{document}

\begin{abstract}
For integers $k\geq 3,d\geq 2,$ we consider the abundance property of pinned $k$-point patterns occurring in $E\subseteq \mathbb R^d$ with positive upper density $\delta(E)$.
 % We consider a pinned variant of Bourgain's theorem \cite{Bourgain_1986_SimplicesPatternFinding} concerning the abundance of affine copies of $d$-point patterns in $\R^d$. We study the occurrence of general $k$-point patterns in $\R^d$ where $k\geq3, d\geq 2$. 
 We show that for any fixed $k$-point pattern $V$, there is a set $E$ with positive upper density such that $E$ avoids all sufficiently large affine copies of $V$, with one vertex fixed at any point in $E$. However, we obtain a positive quantitative result, which states that for any fixed $E$ with positive upper density, there exists a $k$-point pattern $V,$ such that for any $x\in E$, the \textit{pinned scaling factor set} 
 \begin{equation*}
      D_x^V(E):=\{r> 0: \exists \text{ isometry } O \text{ such that }x+r\cdot O(V)\subseteq E\},
 \end{equation*}
 has upper density $\geq \tilde \eps>0$, where constant $\tilde \eps$ depends on $k,d$ and $\delta(E)$.
 
 % $E$ contains many large affine copies of $V$ pinned at any $x\in E$. 
 % It turns out this pattern abundance problem can be connected to Szemer\'edi's theorem in the pattern avoidance problem.
\end{abstract} %%%%%%%%%

\maketitle

\let\thefootnote\relax
\footnotetext{
Key Words: Pinned patterns, Pattern avoidance.

Mathematical Reviews subject classification: Primary: 11B30.}
%%%%%%%%%%
\section{Introduction}
\subsection{Main results}
Define the \textit{upper density} of a set $A\subseteq \R^d$ as
\begin{equation*}
    \delta(A):=\limsup_{R\to \infty} \frac{\mathcal L^d(B(0,R)\bigcap A)}{R^d}
\end{equation*}
where $\mathcal L^d(\cdot)$ denotes the $d$-dimensional Lebesgue measure on $\R^d$.

In 1986, Bourgain proved the following variant of Szemer\'edi-type theorem for simplices.

\begin{theoremA}[Bourgain \cite{Bourgain_1986_SimplicesPatternFinding}]\label{Bourgain's theorem}
    Suppose set $A\subseteq \R^d$ has positive upper density
    and $V=\{p_1,p_2,...,p_d\}\subseteq \R^d$ is a set of $d$ points such that 
    \begin{equation*}
        \dim(\text{span}V)=d-1.
    \end{equation*} Then there exists a number $l$ such that for all $l'\geq l$, there is $O$ in the $d$-dimensional orthogonal group $\mathcal O(d)$ and $x\in \R^d$ such that
    \begin{equation*}
        x+l'O(V)\subseteq A.
    \end{equation*}
\end{theoremA}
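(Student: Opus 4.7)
The plan is to adapt Bourgain's Fourier-analytic approach. My first step is to reduce the statement to a quantitative counting bound on a large but bounded domain. Since $A$ has positive upper density $\delta$, for any prescribed scale there is a ball in which $A$ has density at least $\delta/2$; passing to this ball and rescaling, one is reduced to showing that for any measurable $f = \mathbf 1_E$ with $\int f \geq \delta/2$ supported in a sufficiently large cube, and for all $l \geq l_0(\delta,V)$, the multilinear counting functional
\begin{equation*}
\Lambda_l(f) \;:=\; \int_{\R^d}\int_{\mathcal O(d)} \prod_{i=1}^{d} f\bigl(x + l\,O(p_i)\bigr)\, d\mu(O)\, dx
\end{equation*}
is strictly positive, where $\mu$ is the normalized Haar measure on $\mathcal O(d)$. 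Any positivity of $\Lambda_l(f)$ immediately produces an $x$ and $O$ placing the whole scaled configuration inside $A$.

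Next I would carry out a Littlewood-Paley decomposition $f = f_\sharp + f_\flat$, with $f_\sharp := f * \psi_\rho$ the smooth convolution against an $L^1$-normalized bump at scale $\rho$, and $f_\flat := f - f_\sharp$ the high-frequency remainder. Inserting this into $\Lambda_l$ splits the multilinear form into a principal term $\Lambda_l(f_\sharp, \dots, f_\sharp)$ and $2^d - 1$ mixed terms. For the principal term, the smoothness of $f_\sharp$ at scale $\rho \ll l$ lets one essentially ``freeze'' the inner $O$-integration; together with $\int f_\sharp = \int f \geq \delta/2$, this yields $\Lambda_l(f_\sharp, \dots, f_\sharp) \geq c(\delta) > 0$ for a suitable choice of $\rho$ once $l$ is large enough.

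The core of the argument, and the main obstacle, is showing that each mixed error term (one containing at least one factor of $f_\flat$) tends to zero as $l \to \infty$. Passing to the Fourier side, the error terms reduce to oscillatory integrals governed by the multilinear symbol
\begin{equation*}
K_l(\xi_1,\dots,\xi_d) \;=\; \int_{\mathcal O(d)} \exp\!\Bigl( 2\pi i\, l \sum_{i=1}^{d}\langle O(p_i),\xi_i\rangle \Bigr)\, d\mu(O),
\end{equation*}
evaluated on the hyperplane $\xi_1 + \cdots + \xi_d = 0$. The hypothesis $\dim(\mathrm{span}\,V) = d-1$ enters exactly here: it forces the phase $O \mapsto \sum_i \langle O(p_i), \xi_i\rangle$ on the compact manifold $\mathcal O(d)$ to have non-degenerate enough critical set for $(\xi_i)$ in any compact region bounded away from the origin, so a stationary-phase or van der Corput estimate yields $|K_l| = o(1)$ uniformly as $l \to \infty$. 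Combined with Plancherel and the bound $\|f_\flat\|_2 \leq \|f\|_2$, this decay controls all the mixed terms and gives $\Lambda_l(f) = \Lambda_l(f_\sharp, \dots, f_\sharp) + o(1) > 0$ for every $l$ sufficiently large. The geometric input of the codimension-one span hypothesis is used exclusively, but crucially, through the non-degeneracy of the $\mathcal O(d)$-orbit of $V$ that drives this oscillatory decay.
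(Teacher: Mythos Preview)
The paper does not prove Theorem~A at all: it is quoted as a background result of Bourgain, with a one-line pointer to the original proof via spherical operators and to the later multilinear proof of Lyall--Magyar. There is therefore no ``paper's own proof'' to compare against. Your proposal is a reasonable high-level sketch of the Fourier-analytic/multilinear route (closer in spirit to Lyall--Magyar than to Bourgain's original spherical-maximal argument), and it identifies the right structural pieces: a counting functional averaged over $\mathcal O(d)$, a smooth/rough splitting, a main term controlled by density, and error terms governed by oscillatory decay coming from the non-degenerate orbit of $V$.

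That said, as a proof your sketch has a genuine gap at the key step. The assertion that $|K_l(\xi_1,\dots,\xi_d)|=o(1)$ ``uniformly as $l\to\infty$'' for $\xi$ ``bounded away from the origin'' is not what one actually needs, and is not what stationary phase gives you. The kernel $K_l$ depends only on $l\xi$, so there is no decay in $l$ alone; the decay is in $|l\xi|$ and degenerates along certain directions determined by the geometry of $V$. The real work is a decomposition of frequency space into a region where the oscillatory integral over $\mathcal O(d)$ decays (using the codimension-one hypothesis to rule out the degenerate phase directions on the constraint $\sum\xi_i=0$) and a complementary low-frequency region where one instead uses that $\widehat{f_\flat}$ is small because of the mollification at scale $\rho$. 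Coupling these two regimes, and choosing $\rho$ as a function of $l$ and $\delta$, is exactly where the argument becomes nontrivial; your sketch collapses this into a single line. If you want to turn this into an actual proof, that frequency dichotomy and the quantitative dependence of $\rho$ on $l$ are the pieces you would need to supply.
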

The original proof of Theorem \hyperref[Bourgain's theorem]{A} is based on spherical operators \cite{Bourgain_1984_BourgainCircluarMaximalIHESPreprints}. A new proof for a stronger version of the theorem is given by Lyall and Magyar \cite{Lyall_Magyar_NewProofForBourgainTheoremOnSimplices} on the basis of  multilinear analysis. 

Inspired by Bourgain's theorem \hyperref[Bourgain's theorem]{A}, C. Wang \cite{wang_2025_PinnedPatternI} considered a pinned variant and proved a result for $2$-point patterns in all dimensions .
\begin{theoremB}\label{Wang1}
    For $d\geq 2$, there is a constant $C_d>0$ such that for all $A\subseteq \R^d$ with $\delta(A)>0$ and all $x\in A,$
    $$ \delta(D_x(A))=\limsup_{R\to \infty}\frac{\mathcal L^1(D_x(A)\bigcap (-R,R))}{R}\geq C_d \delta(A),$$
    where the \textit{pinned distance set} $D_x(A)$ is defined as 
    \begin{equation}\label{pinneddisset}
        D_x(A):=\{|x-y|:y\in A\}.
    \end{equation}
    % and $|\mathbb S^{d-1}|$ is the area of the unit sphere.
\end{theoremB}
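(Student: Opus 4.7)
The plan is a Fubini-type argument in polar coordinates centered at the pinned point $x$. The heuristic is that the mass of $A$ in a large ball around $x$ is carried entirely by spheres $S(x,r)$ with $r\in D_x(A)$, and since each such sphere has $(d-1)$-dimensional surface area at most a dimensional constant times $r^{d-1}$, the set of admissible radii is forced to be abundant on the scale of $R$.

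Step one is to extract a favorable scale. Since $\delta(A)>0$, I choose $R_n\to\infty$ with $\mathcal L^d(A\cap B(0,R_n))/R_n^d\to \delta(A)$. Using $B(x,R_n+|x|)\supseteq B(0,R_n)$ and setting $R_n':=R_n+|x|$, I obtain
\begin{equation*}
    \mathcal L^d\bigl(A\cap B(x,R_n')\bigr)\;\geq\;\bigl(\delta(A)-o(1)\bigr)(R_n')^d.
\end{equation*}
Step two is to decompose this mass in polar coordinates centered at $x$:
\begin{equation*}
    \mathcal L^d\bigl(A\cap B(x,R_n')\bigr)\;=\;\int_0^{R_n'} \mathcal H^{d-1}\bigl(A\cap S(x,r)\bigr)\,dr.
\end{equation*}
If $r\notin D_x(A)$ the integrand vanishes; otherwise the trivial bound $\mathcal H^{d-1}(A\cap S(x,r))\leq c_d\,r^{d-1}$ applies, where $c_d$ denotes the surface area of the unit sphere in $\R^d$. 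Inserting these observations yields
\begin{equation*}
    \bigl(\delta(A)-o(1)\bigr)(R_n')^d\;\leq\;c_d\,(R_n')^{d-1}\,\mathcal L^1\bigl(D_x(A)\cap(0,R_n')\bigr),
\end{equation*}
and dividing by $(R_n')^d$ gives $\mathcal L^1(D_x(A)\cap(0,R_n'))/R_n'\geq (\delta(A)-o(1))/c_d$. Taking the limsup in $n$, and using $D_x(A)\subseteq[0,\infty)$, one concludes $\delta(D_x(A))\geq \delta(A)/c_d$, so the constant $C_d:=1/c_d$ works.

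There is no substantial obstacle here; in particular, unlike Bourgain's simplex result \hyperref[Bourgain's theorem]{A}, no spherical averaging or multilinear machinery is needed, since the two-point pinned question reduces directly to a decomposition of the Lebesgue measure of $A$ by distance to $x$. The only subtle point is measurability: $D_x(A)$ is the continuous image of the measurable set $A$ under $y\mapsto |x-y|$, hence analytic and Lebesgue measurable; alternatively one may replace $D_x(A)$ throughout by the Borel subset $\{r:\mathcal H^{d-1}(A\cap S(x,r))>0\}$, which is contained in $D_x(A)$ and for which the estimate above holds verbatim.
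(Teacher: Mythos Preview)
Your argument is correct and is exactly the ``spherical integral argument'' that the paper attributes to \cite{wang_2025_PinnedPatternI} and reuses in its own proof of Proposition~\ref{MainLemma} (see the polar decomposition in equation~\eqref{nominator}, the trivial sphere bound in equation~\eqref{I1}, and Lemma~\ref{translationinvariance} for the recentering at the origin). The paper does not reproduce a proof of Theorem~B itself, but your Fubini-in-polar-coordinates step together with the bound $\mathcal H^{d-1}(A\cap S(x,r))\leq c_d\,r^{d-1}$ is precisely the intended method.
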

The pinned distance set $D_x(A)$ collects all distances between a fixed point $x$ and all other points in $A$.

As we mentioned, the above result is about $k$-point patterns where $k=2$. In this case, there is only one type of patterns, which is distance. In this note, we follow this topic and study the case where $k\geq 3$. When $k\geq 3,$ the patterns become more complicated and even the analogous result of Theorem \hyperref[Wang1]{B} no longer holds. This is the following negative result.
\begin{theorem}
    \label{counterexample}
     For $k\geq 3$ and any fixed $k$-point pattern $V=\{p_1,p_2,p_3,...,p_k\}\subseteq \R^d$ that avoids 3 collinear points, there is a set $E$ with positive upper density satisfies that for any $x\in E$, there is $R(x)>0$ such that 
    \begin{equation*}
        (R(x),\infty)\bigcap D_x^{V}(E)=\emptyset,
    \end{equation*}
    where 
    \begin{equation}\label{psfs}
        D_x^V(E):=\{r> 0: \exists \text{ isometry } O\in\mathcal O(d) \text{ such that }x+rOV\subseteq E\}.
    \end{equation}
\end{theorem}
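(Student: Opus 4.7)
I will take $E$ to be an unbounded spherical sector (a cone) at the origin whose angular opening is strictly smaller than the angular spread of the vertices of $V$ seen from the origin. The hypothesis ``no three collinear points'' is used precisely to guarantee this angular spread is positive: three vertices of $V$ on a common line through the origin would be collinear on that line. Consequently, among any three chosen vertices of $V$ one can find two nonzero, non-parallel vertices $p_s$ and $p_t$; set $\theta_V := \angle(p_s/|p_s|, p_t/|p_t|) \in (0, \pi]$. Fix any $\theta \in (0, \theta_V)$, any reference direction $u_0 \in \sph^{d-1}$, and let $S := \{u \in \sph^{d-1} : \angle(u, u_0) < \theta/2\}$ and
\[E := \{y \in \R^d \setminus \{0\} : y/|y| \in S\} \cup \{0\}.\]
A polar-coordinate computation gives $\mathcal L^d(B(0, R) \cap E) = \HH^{d-1}(S)\, R^d / d$, so $\delta(E) = \HH^{d-1}(S)/d > 0$.

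\textbf{Avoidance at large scales.} Fix $x \in E$ and $O \in \mathcal O(d)$; for $r > 0$ put $y_l := x + rO(p_l)$ for $l \in \{s, t\}$. Writing $y_l/|y_l|$ in terms of $O(p_l) + x/r$ and using $|O(p_l)| = |p_l|$ yields the $O$-uniform estimate
\[\left|\frac{y_l}{|y_l|} - \frac{O(p_l)}{|p_l|}\right| \leq \frac{2|x|}{r\,|p_l|} \quad \text{whenever } r > 2|x|/|p_l|.\]
Set $\varepsilon := (\theta_V - \theta)/4$ and $R(x) := 8|x|/((\theta_V - \theta)\,\min\{|p_s|, |p_t|\})$; for any $r > R(x)$ the unit vectors $y_s/|y_s|, y_t/|y_t|$ lie within $\varepsilon$ of $O(p_s)/|p_s|, O(p_t)/|p_t|$, which are themselves at angle exactly $\theta_V$ apart (as $O$ preserves angles). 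Hence $\angle(y_s/|y_s|, y_t/|y_t|) \geq \theta_V - 2\varepsilon > \theta = \operatorname{diam}(S)$, so $y_s/|y_s|$ and $y_t/|y_t|$ cannot both lie in $S$. Therefore at least one of $y_s, y_t$ falls outside $E$, forcing $x + rO(V) \not\subseteq E$ for every $O \in \mathcal O(d)$; equivalently $r \notin D_x^V(E)$.

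\textbf{Main obstacle.} The analysis is almost entirely elementary once the construction is in place; the key step is recognizing that a sector works. The one calculation requiring care is the uniform-in-$O$ direction error bound above, which holds precisely because the norms $|p_l|$ are preserved by $\mathcal O(d)$. The hypothesis ``no three collinear points'' is used in exactly one place --- to ensure $\theta_V > 0$ so that an admissible $\theta < \theta_V$ exists --- and the argument makes no further use of $k \geq 3$ beyond guaranteeing the existence of two nonzero non-parallel vertices inside any chosen triple.
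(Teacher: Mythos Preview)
Your construction --- a solid cone of aperture strictly smaller than an angle determined by $V$ --- is exactly the paper's, and your argument is correct. The verification differs in a way worth noting. The paper measures the angle \emph{at the pinned vertex $x$}: it proves a separate geometric lemma (Lemma~\ref{xianrandedongxi}) that any two far-away cone points $y,y'$ satisfy $\angle(y-x,y'-x)\le 2^{10}\alpha'$, then compares this with the smallest interior angle $\alpha$ of $V$; this uses the normalization $p_1=0$ so that $x$ is itself a vertex of the dilate. You instead measure the angle \emph{at the apex of the cone}: your estimate $\bigl|\,y_l/|y_l|-O(p_l)/|p_l|\,\bigr|\le 2|x|/(r|p_l|)$ shows that $y_s,y_t$ inherit the angular separation $\theta_V$ of $p_s,p_t$, which exceeds the cone's opening $\theta$. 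Your route is shorter, avoids the spherical-coordinate computation of Lemma~\ref{xianrandedongxi}, and does not require $0\in V$. One small imprecision: Euclidean distance $\varepsilon$ between unit vectors corresponds to angular distance $2\arcsin(\varepsilon/2)\le\tfrac{\pi}{2}\varepsilon$, not $\varepsilon$, so the lower bound should read $\theta_V-\pi\varepsilon$; since you chose $\varepsilon=(\theta_V-\theta)/4$ and $\pi<4$, the conclusion $>\theta$ survives.
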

Let us give several remarks on the theorem.
\begin{itemize}
    \item Without loss of generality, we can always assume that $p_1=0$ and $p_2=(1,0,...,0)$ by translating and rescaling the pattern.
    \item Note that when $d\geq 3$, the condition here is weaker than Bourgain's original condition that the pattern $V$ satisfies $\dim($span$V)=d-1$.
    \item In the case of $k$-point pattern ($k\geq 3$), we replace the pinned distance set $D_x(E)$ with \textit{pinned scaling factor set} $D_x^V(E)$. Intuitively, the pinned scaling factor set consists of all scales at which a given pattern appears with one vertex fixed at $x$.
\end{itemize}

The proof of Theorem \ref{counterexample} is constructing an example and is provided in {Example} \ref{ThinConeExample} in Section \ref{SectionCounterExample}.

Theorem \ref{counterexample} confirms that for a fixed set with positive upper density, we cannot anticipate the abundance of pinned affine copies is true for all patterns when $k\geq 3$. However, there should be some patterns whose large pinned affine copies occur frequently since the set has positive upper density. This is the following result. 
\begin{theorem}\label{MainTheorem}
         For $d\geq 2 $, $ \eps_0>0$ and $k\geq 3$, there exist a finite set $\mathcal V=\mathcal V(d,k,\eps_0)$ of  $k$-point patterns  and a positive number $\tilde \eps(\eps_0,k,d)$ such that 
          the following holds.
         % Let $M_{d}=10^{10}\pi C_d^2$ for $k=3$ and $M_d=10C_d$ for $k\geq 4$. Suppose 
        For all $A\subseteq \R^d$ with $\delta(A)\geq\eps_0$, there is
    a pattern $V\in \mathcal V$
    % and $|p_i|=1,\forall i$
    such that for all $x\in A$, we have
    % the upper density of the pinned scaling factor set $D_x^V(A)$ satisfies 
    \begin{equation*}
       \delta(D_x^V(A))\geq{\tilde \eps(\eps_0,k,d)}>0.
    \end{equation*}

   % , where 
   % \begin{equation}\label{epseps0k}
   %     \eps(\eps_0,k,d)=\begin{cases}
   %        \eps_0^2, & k=3,\\
   %         \exp{[-\exp((\frac{20C_d\pi}{\eps_0})^{1/c_{k-1}})]},& k\geq4.          
   %     \end{cases}
   % \end{equation}
   % $c_{k-1}$ comes from Szemer\'edi's theorem and depends only on $k$.
\end{theorem}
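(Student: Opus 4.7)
My plan is to iterate Theorem \hyperref[Wang1]{B} to build large families of $k$-point configurations with first vertex at each $x\in A$, and then apply two successive pigeonhole steps (first in shape space, then over the pinning points) to isolate a single pattern $V\in\mathcal V$.

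First I would construct $\mathcal V$. After normalizing $p_1=0$ and $p_2=(1,0,\ldots,0)$, the shape of a $k$-point pattern modulo rotation is encoded by $(p_3,\ldots,p_k)$ in a bounded parameter region of $\R^{d(k-2)}$ whose diameter can be controlled in terms of $d,k,\eps_0$ via the Fubini-density estimate obtained in the next step. Let $\mathcal V=\mathcal V(d,k,\eps_0)$ consist of one representative per cell of an $\eta$-fine discretization of this region, with $\eta=\eta(d,k,\eps_0)$ to be tuned later.

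Next, fix any $x\in A$. Theorem \hyperref[Wang1]{B} applied at $x$ gives $\delta(D_x(A))\geq C_d\eps_0$, hence a positive-density set of radii $r_1=|x-y_1|$ with $y_1\in A$. Theorem B applied in turn at each such $y_1$ gives a positive-density set of $r_2=|y_1-y_2|$ with $y_2\in A$, and repeating $k-1$ times yields, by a Fubini-type estimate, a subset of $(\R_{>0})^{k-1}$ of $(k-1)$-dimensional density at least $(C_d\eps_0)^{k-1}$ realized by genuine tuples $(x,y_1,\ldots,y_{k-1})$ in $A$. Each tuple also carries angular data (which sphere point $y_j$ is chosen on the sphere of radius $r_j$ about $y_{j-1}$); dividing by the overall scale and discretizing in both radial and angular parameters places every configuration into one cell of $\mathcal V$. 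A pigeonhole over the cells yields $V_x\in\mathcal V$ with $\delta(D_x^{V_x}(A))\geq\tilde\eps'(d,k,\eps_0)>0$. A second pigeonhole over the finite-valued map $x\mapsto V_x$ then produces a single $V\in\mathcal V$ attained on a subset of $A$ of upper density at least $\eps_0/|\mathcal V|$, giving the conclusion for those pinning points.

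The principal obstacle I foresee is upgrading ``on a positive-density subset of $A$'' to the ``for every $x\in A$'' clause of the statement. The naive second pigeonhole produces a $V$ serving many $x$, but not all of them. One natural remedy is to enlarge $\mathcal V$ so that for each $x$ the good set $\mathcal G_x=\{V\in\mathcal V:\delta(D_x^V(A))\geq\tilde\eps\}$ covers a definite positive fraction of $\mathcal V$ uniformly in $x$, and then intersect the $\mathcal G_x$ via a covering or averaging argument to extract a common $V$; an alternative is an ergodic-compactness argument (passing to a measure-preserving system built from translates of $A$ and reading off a shape that is invariant across the system). I expect this uniformization step, rather than the vertex-by-vertex construction, to be the real difficulty in the proof.
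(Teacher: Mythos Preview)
Your approach diverges substantially from the paper's and has real gaps before the uniformity issue you flag. Iterating Theorem~B does not give a density statement in configuration space: applying it at $x$ yields a dense set of radii $r_1=|x-y_1|$, but for each $r_1$ you only know that \emph{some} $y_1$ exists, with no control on its angular position, and you then apply Theorem~B again at that single $y_1$. The product $(C_d\eps_0)^{k-1}$ is therefore at best a density in the chain $(r_1,\ldots,r_{k-1})$ of successive radii, not in shape space. Second, after normalizing $p_1=0$ and $p_2=e_1$, the remaining vertices $p_3,\ldots,p_k$ range over all of $\R^d$; nothing in your radial density bound constrains ratios like $r_2/r_1$, so the parameter region is not bounded and cannot be finitely discretized into the $\mathcal V$ you describe. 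Both obstructions already appear for $k=3$.

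The paper takes a completely different route. It fixes $\mathcal V=\{V_i^k:1\le i\le n\}$ to be a very specific family: each $V_i^k$ is the origin together with $k-1$ points on the unit circle at angles $0,\tfrac{2\pi i}{n+1},\ldots,\tfrac{2\pi(k-2)i}{n+1}$. Translation invariance (Lemma~\ref{translationinvariance}) reduces to the pin $x=0$. Arguing by contradiction, if every $D_0^{V_i^k}(A)$ has small upper density then for most $r$ the angular slice $E_r=\{\theta:r(\cos\theta,\sin\theta)\in A\}\subseteq\R/2\pi\mathbb Z$ contains no $(k-1)$-term arithmetic progression with step $\tfrac{2\pi i}{n+1}$ for any $i$; a quantitative Szemer\'edi bound over $\mathbb Z/(n+1)\mathbb Z$ (Lemma~\ref{szmelemma}) then forces $\mathcal L^1(E_r)$ to be tiny, and integrating in $r$ contradicts $\delta(A)\geq\eps_0$. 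The engine is thus Szemer\'edi's theorem on arithmetic progressions, not a shape-space pigeonhole. Regarding your final concern: the paper's reduction applies the pinned-at-origin Proposition~\ref{MainLemma} to each translate $A-x$ separately, so the $V$ it extracts is a priori $x$-dependent as well; the stated quantifier order $\exists V\,\forall x$ is not obviously recovered by that reduction alone, and your worry about this step is legitimate.
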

\begin{remark}
    The number $\tilde \eps(\eps_0,k,d)$ can be written explicitly.
    \begin{equation*}
        {\tilde \eps(\eps_0,k,d)}=\frac{\eps(\eps_0,k,d)}{M_d}.
    \end{equation*}
    In this expression, $M_d=10^{10}\pi C_d^2$ for $k=3$ and $10C_d$ for $k\geq 4.$
\begin{equation}\label{epseps0k}
       \eps(\eps_0,k,d)=\begin{cases}
          \eps_0^2, & k=3,\\
           \exp{[-\exp((\frac{20C_d\pi}{\eps_0})^{1/c_{k-1}})]},& k\geq4.          
       \end{cases}
   \end{equation}
    $C_d$ is a constant depending on $d$ and $c_{k-1}$ comes from Szemer\'edi's theorem and depends on $k$.
\end{remark}

% \begin{theorem}\label{MainTheorem}
%          For $d\geq 2 $, $ \eps_0>0$ and $k\geq 3$, there exist a finite set of  $k$-point patterns $\mathcal V=\mathcal V(d,k,\eps_0)$ and a constant $C_d$ depending only on $d$ such that the following holds. Let $M_{d}=10^{10}\pi C_d^2$ for $k=3$ and $M_d=10C_d$ for $k\geq 4$. Suppose $A\subseteq \R^d$ with $\delta(A)\geq\eps_0$. Then there is
%     a pattern $V\in \mathcal V$
%     % and $|p_i|=1,\forall i$
%     such that the upper density of the pinned scaling factor set $D_x^V(A)$ satisfies 
%     \begin{equation*}
%        \delta(D_x^V(A))\geq\frac{\eps(\eps_0,k,d)}{M_{d}}
%     \end{equation*}
%    for all $x\in A$, where 
%    \begin{equation}\label{epseps0k}
%        \eps(\eps_0,k,d)=\begin{cases}
%           \eps_0^2, & k=3,\\
%            \exp{[-\exp((\frac{20C_d\pi}{\eps_0})^{1/c_{k-1}})]},& k\geq4.          
%        \end{cases}
%    \end{equation}
%    $c_{k-1}$ comes from Szemer\'edi's theorem and depends only on $k$.
% \end{theorem}
Let us pause to compare Theorem \ref{counterexample} and Theorem \ref{MainTheorem}. While Theorem \ref{counterexample} says one can defeat any fixed pattern by constructing the set appropriately, Theorem \ref{MainTheorem} says that one cannot defeat them all at once in any fixed set with positive upper density. No matter how one choose a dense set $A$, there will always be some pattern $V_i$ from a finite predetermined catalog that does appear frequently at every pin.

The proof is quantitative and relies on additive-combinatorial machinery, Szemer\'edi's theorem on arithmetic progressions combined with the spherical integral argument in \cite{wang_2025_PinnedPatternI}. This argument is useful when searching for some ``isosceles" pattern. We will divide the proof into two parts, one is the case $d=2$ in Section \ref{PlanarCase} and the other is the case $d\geq 3$ in {Section} \ref{SpacialCase}.

Let us use the following Figure \ref{fig:difference} to exhibit the difference between \cite{wang_2025_PinnedPatternI} and this note. 
\begin{figure}[h]
    \centering

\tikzset{every picture/.style={line width=0.75pt}} %set default line width to 0.75pt        

\begin{tikzpicture}[x=0.75pt,y=0.75pt,yscale=-1,xscale=1]
%uncomment if require: \path (0,555); %set diagram left start at 0, and has height of 555

%Straight Lines [id:da6848440854497926] 
\draw    (70.07,70.61) -- (479.67,71.48) ;
\draw   (289.25,349.87) -- (310.35,328.53)(289.13,328.65) -- (310.47,349.75) ;
%Straight Lines [id:da4346105161706888] 
\draw [line width=1.5]    (70.73,100.61) -- (480.33,101.48) ;
%Straight Lines [id:da43791545395070397] 
\draw    (70.07,395.94) -- (479.67,396.81) ;

% Text Node
\draw (278,75) node [anchor=north west][inner sep=0.75pt]   [align=left] {\cite{wang_2025_PinnedPatternI}};
% Text Node
\draw (395,76) node [anchor=north west][inner sep=0.75pt]   [align=left] {This note};
% Text Node
\draw (100,163) node [anchor=north west][inner sep=0.75pt]   [align=left] {$\displaystyle k$: number of the \\points of the pattern };
% Text Node
\draw (295.33,178.4) node [anchor=north west][inner sep=0.75pt]    {$2$};
% Text Node
\draw (105,106) node [anchor=north west][inner sep=0.75pt]   [align=left] {$\displaystyle d$: dimension of \\the ambient space};
% Text Node
\draw (282,118.73) node [anchor=north west][inner sep=0.75pt]    {$d\geq 2$};
% Text Node
\draw (404.67,118.4) node [anchor=north west][inner sep=0.75pt]    {$d\geq \ 2$};
% Text Node
\draw (405,177.4) node [anchor=north west][inner sep=0.75pt]    {$k\geq \ 3$};
% Text Node
\draw (79,226) node [anchor=north west][inner sep=0.75pt]   [align=left] {Whether any pinned \\scaling factor set has \\psotive upper density?};
% Text Node
\draw (287.67,248.67) node [anchor=north west][inner sep=0.75pt]   [align=left] {Yes};
% Text Node
\draw (416,248) node [anchor=north west][inner sep=0.75pt]   [align=left] {No};
% Text Node
\draw (68,307) node [anchor=north west][inner sep=0.75pt]   [align=left] {Whether there is a pinned\\scaling factor set (for a\\specific pattern) has\\positive upper density?};
% Text Node
\draw (413.67,333) node [anchor=north west][inner sep=0.75pt]   [align=left] {Yes};

\end{tikzpicture}

    \caption{difference between \cite{wang_2025_PinnedPatternI} and this note}
    \label{fig:difference}
\end{figure}

Bourgain's result can be generalized in many directions. In \cite{Ziegler_2006_BourgainTHeoremInPlaneGeneralPatternDeltaNBHDVersion}, Ziegler considered all multi-point patterns on the plane and proved that all sufficiently large dilates of them can be contained in an arbitrarily small neighborhood of sets with positive upper density. Our results also give a partial answer for the pinned version of the Ziegler-type result in all dimensions. 

\subsection{Szemer\'edi's theorem} A key ingredient of our proof is the quantitative Szemer\'edi's theorem, which we record more details in Appendix \ref{Appendix:Szemeredi's theorem}. By the technique in \cite{wang_2025_PinnedPatternI} that is used to search for a type of specific ``“isosceles" patterns, we can reduce the problem to a pattern avoidance problem over the torus $\mathbb R/2\pi\mathbb Z$. Szemer\'edi's theorem gives us a nice quantitative upper bound for a subset of $\mathbb R/2\pi\mathbb Z$ that forbids certain pattern. This is Proposition \ref{MainLemma} in Section  \ref{Section:MainLemma}.

\subsection{Structure of the note}The structure of the note is as follows: In {Section} \ref{SectionCounterExample}, we will prove Proposition \ref{counterexample} with a counterexample. In {Section} \ref{SectionMainTheorem}, we will prove Theorem \ref{MainTheorem} where the whole section is divided into two cases: the planar case in Section \ref{PlanarCase} and the higher dimensional cases \ref{SpacialCase}.

\section{Proof of Theorem \ref{counterexample}}\label{SectionCounterExample}
Recall the statement of Theorem \ref{counterexample}. Its proof is based on the construction of  an example. Heuristically, for any fixed $k$-point pattern $V$ without three colinear point, we can take the smallest (positive) angle $\alpha$ formed by the points in $V$. Then one can always consider a thin cone satisfies the following conditions. 
\begin{itemize}
    \item The apex of the cone is the origin.
    \item The apex angle of the cone is smaller than the smallest angle $\alpha$ of $V$.
    \item The thin cone has positive upper density.
\end{itemize}
 If the origin is chosen as one of the vertices of the dilate of $V$, then there is no dilate of $V$ formed by the origin and other points in the thin cone, since any such shape has a smaller angle. When the scaling factor is sufficiently large, such an argument works for any fixed point in the thin cone, not only the origin. Now we make this rigorous.

\begin{example}[thin cone]\label{ThinConeExample}
   Let 
        \begin{align*}
            &\alpha:=\text{the smallest angle formed by the pattern $V$},\\
            &r_{min}:= \text{the shortest side-length of the pattern $V$.}
        \end{align*}
   By rescaling, we can assume $r_{min}=1.$ Since the pattern avoids collinear points, we have $\alpha\in (0,\pi).$
    
   %  Take rapidly increasing sequence $R_i>\frac{100d}{\eps}\sum_{j<i}R_j$ where $0<\eps\ll 1$ and define 
   %  \begin{equation*}
   %      B_i:=B(c_i;r_i)=B(x_i^1,\bar y_i;r_i)=B(x_i^1,y^2_i,...,y^d_i;r_i)
   %  \end{equation*}
   %  where $B(c_i;r_i)\subseteq \R^d$ is the ball with center $c_i$ and radius $r_i$, $x_i^1=R_i,~\bar y_i=0,~ r_i=\sin(\alpha'/2)R_i:=\sin\frac{\alpha}{2^{101}d}R_i$. Let  
   %  \begin{equation*}
   %      E:=\bigsqcup_i B_i.
   %  \end{equation*}
   % By the definition of $r_i$, it can be checked that the set $E$ is contained in the 
   Let $\alpha'=\frac{\alpha}{2^{100}d}\ll 1$. Define the solid cone $C(\alpha')$ as 
    \begin{equation*}
        C(\alpha'):=\{x\in\R^d:\angle\langle x/||x||,{e}_1\rangle\leq \alpha'/2\}.
    \end{equation*}
    % Indeed, for any $y\in B_i\subseteq E,$
    % \begin{equation*}
    %     r(y):=|y-c_i|\leq r_i.
    % \end{equation*}
    % So 
    % \begin{equation*}
    %     \angle\langle y/||y||,e_1\rangle
    % \end{equation*}
   The figure for spatial case is depicted in Figure \ref{figex3}. 
   % Intuitively, people can easily convince themselves that this is true. A rigorous argument with the same flavor can be found in the proof of Lemma \ref{xianrandedongxi} below.
    \begin{figure}[h]
    \centering

\tikzset{every picture/.style={line width=0.75pt}} %set default line width to 0.75pt        

\begin{tikzpicture}[x=0.75pt,y=0.75pt,yscale=-1,xscale=1]
%uncomment if require: \path (0,555); %set diagram left start at 0, and has height of 555

%Straight Lines [id:da8297492678232825] 
\draw    (56,250.32) -- (626.47,253) ;
\draw [shift={(628.47,253.01)}, rotate = 180.27] [color={rgb, 255:red, 0; green, 0; blue, 0 }  ][line width=0.75]    (10.93,-3.29) .. controls (6.95,-1.4) and (3.31,-0.3) .. (0,0) .. controls (3.31,0.3) and (6.95,1.4) .. (10.93,3.29)   ;
%Straight Lines [id:da14712042103944356] 
\draw    (56.2,300.82) -- (56.99,181.22) ;
\draw [shift={(57,179.22)}, rotate = 90.38] [color={rgb, 255:red, 0; green, 0; blue, 0 }  ][line width=0.75]    (10.93,-3.29) .. controls (6.95,-1.4) and (3.31,-0.3) .. (0,0) .. controls (3.31,0.3) and (6.95,1.4) .. (10.93,3.29)   ;
%Straight Lines [id:da05538993884255505] 
\draw    (23.95,289.75) -- (86.79,212.45) ;
\draw [shift={(88.05,210.9)}, rotate = 129.11] [color={rgb, 255:red, 0; green, 0; blue, 0 }  ][line width=0.75]    (10.93,-3.29) .. controls (6.95,-1.4) and (3.31,-0.3) .. (0,0) .. controls (3.31,0.3) and (6.95,1.4) .. (10.93,3.29)   ;
%Straight Lines [id:da29125512485522154] 
\draw [color={rgb, 255:red, 208; green, 2; blue, 27 }  ,draw opacity=1 ][line width=1.5]    (56,250.32) -- (534.29,194.42) ;
%Straight Lines [id:da4891207715660847] 
\draw    (534.29,253.22) ;
\draw [shift={(534.29,253.22)}, rotate = 0] [color={rgb, 255:red, 0; green, 0; blue, 0 }  ][fill={rgb, 255:red, 0; green, 0; blue, 0 }  ][line width=0.75]      (0, 0) circle [x radius= 3.35, y radius= 3.35]   ;
%Curve Lines [id:da08139034077026752] 
\draw    (105.8,244.02) .. controls (115.4,242.42) and (116.2,258.42) .. (108.2,255.22) ;
%Curve Lines [id:da9355256829970785] 
\draw    (110.6,243.22) .. controls (120.2,241.62) and (121,259.22) .. (113,256.02) ;
%Straight Lines [id:da6925295367053083] 
\draw [color={rgb, 255:red, 208; green, 2; blue, 27 }  ,draw opacity=1 ][line width=1.5]    (56,250.32) -- (534.29,312.02) ;
%Shape: Ellipse [id:dp522273100071621] 
\draw  [color={rgb, 255:red, 208; green, 2; blue, 27 }  ,draw opacity=1 ][line width=1.5]  (520.12,253.22) .. controls (520.12,220.75) and (526.46,194.42) .. (534.29,194.42) .. controls (542.12,194.42) and (548.47,220.75) .. (548.47,253.22) .. controls (548.47,285.69) and (542.12,312.02) .. (534.29,312.02) .. controls (526.46,312.02) and (520.12,285.69) .. (520.12,253.22) -- cycle ;
%Straight Lines [id:da4708210434914617] 
\draw  [dash pattern={on 4.5pt off 4.5pt}]  (534.29,194.42) -- (534.29,253.22) ;

% Text Node
\draw (79.8,262.62) node [anchor=north west][inner sep=0.75pt]  [font=\footnotesize]  {$\alpha '=\frac{\alpha }{2^{100} d}$};
% Text Node
\draw (497.01,174.6) node [anchor=north west][inner sep=0.75pt]  [color={rgb, 255:red, 208; green, 2; blue, 27 }  ,opacity=1 ]  {$C( \alpha ')$};
% Text Node
\draw (31.2,239.8) node [anchor=north west][inner sep=0.75pt]    {$O$};
% Text Node
\draw (507,260.4) node [anchor=north west][inner sep=0.75pt]    {$R$};
% Text Node
\draw (536,219.4) node [anchor=north west][inner sep=0.75pt]    {$\sim \alpha 'R$};

\end{tikzpicture}

        \caption{Spatial case of $C(\alpha')$}
        \label{figex3}
    \end{figure}
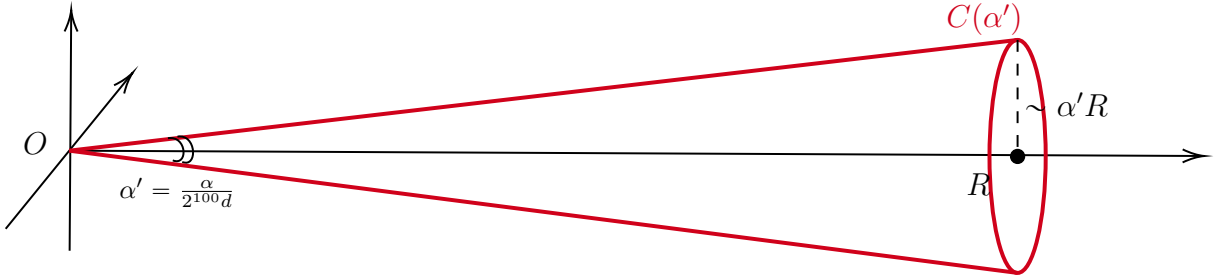

% By performing similar computations in the examples from \cite{wang_2025_PinnedPatternI}, one can show that 
% % the volume of the largest ball will dominate due to the rapidly increasing of $R_i$. So 
% \begin{equation*}
% \delta(E)=\limsup_{R\to\infty}\frac{\mathcal L^d(\bigsqcup_i B_i\cap B(0;R))}{R^d}\geq \lim_{n\to\infty}\frac{\mathcal L^d(B_n)}{R_n^d}=\lim_{n\to\infty}\frac{C_dr_n^d}{R_n^d}\geq C_d\left(\frac{\alpha}{2^{101}d}\right)^d>0.
% \end{equation*} 
% by a similar argument above.
It can be checked that $\delta(C(\alpha'))>0.$ Indeed, for any $R>0$, $C(\alpha')\cap B(0,R)$ is a ``cone" with a cap from $R\mathbb S^{d-1}$ as its base. Therefore, by the volume formula of the $d-1$-dimensional cone,
\begin{equation*}
    \frac{\mathcal L^{d}(C(\alpha')\cap B(0,R))}{R^d}\geq C_d\frac{(\alpha' R)^{d-1}\cdot R}{R^d}=C_d\alpha'^{d-1}>0.
\end{equation*}

Now it suffices to prove $C(\alpha')$ does not contain any sufficiently large pinned affine copy of $V$.
We claim the following fact.
\begin{lemma}\label{xianrandedongxi}
 Fix any $x\in C(\alpha')$, there is $M(x,\alpha')>0$ such that for all $y,y'\in C(\alpha')$ such that $M(x,\alpha')\leq |y-x|\leq|y'-x|$, we have 
 \begin{equation*}
     \angle\langle y-x,y'-x\rangle\leq 2^{10}\alpha'.
 \end{equation*}
\end{lemma}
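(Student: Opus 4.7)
The intuition is that the cone $C(\alpha')$ is very narrow, so any two points $y,y'\in C(\alpha')$ subtend a small angle as seen from the apex $0$; the only obstruction to the same bound holding from another pin $x\in C(\alpha')$ is the base-point shift, which becomes negligible once $y,y'$ are sufficiently far from $x$. The plan is to formalize this by comparing the directions of $y-x$ and $y'-x$ with those of $y$ and $y'$, using the spherical triangle inequality for the angular metric together with the planar law of sines.

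Concretely, I would begin from the spherical triangle inequality
\begin{equation*}
    \angle\langle y-x,\, y'-x\rangle \;\leq\; \angle\langle y-x,\, y\rangle + \angle\langle y,\, y'\rangle + \angle\langle y',\, y'-x\rangle.
\end{equation*}
The middle term is bounded by $\alpha'$ since $y,y'\in C(\alpha')$ both lie within angular distance $\alpha'/2$ of $e_1$. For the first term, apply the law of sines in the planar triangle with vertices $0, x, y$: its interior angle at $0$ is $\angle\langle x,y\rangle \leq \alpha'$ (both $x$ and $y$ lie in $C(\alpha')$), while its interior angle at $y$ equals $\angle\langle y,\, y-x\rangle$, because the two vectors emanating from $y$ are $-y$ and $-(y-x)$, and angles are invariant under simultaneous negation. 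Hence
\begin{equation*}
    \sin\angle\langle y-x,\,y\rangle \;=\; \frac{|x|\,\sin\angle\langle x,y\rangle}{|y-x|} \;\leq\; \frac{|x|\,\alpha'}{|y-x|}.
\end{equation*}

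Setting $M(x,\alpha') := 1000\max(|x|,1)$ (the case $x=0$ is trivial, since then $y-x=y$ and $y'-x=y'$), the hypothesis $|y-x|\geq M(x,\alpha')$ forces $\sin\angle\langle y-x,y\rangle \leq \alpha'/1000$, and Jordan's inequality $\theta\leq (\pi/2)\sin\theta$ on $[0,\pi/2]$ then gives $\angle\langle y-x,y\rangle \leq \alpha'/500$. The identical argument in the triangle $0,x,y'$ yields $\angle\langle y'-x,y'\rangle \leq \alpha'/500$. Substituting back,
\begin{equation*}
    \angle\langle y-x,\,y'-x\rangle \;\leq\; \tfrac{\alpha'}{500} + \alpha' + \tfrac{\alpha'}{500} \;\leq\; 2\alpha' \;\leq\; 2^{10}\alpha',
\end{equation*}
with ample slack. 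I do not expect any step to be a real obstacle: the only minor points requiring care are the identification of the interior angle of $0xy$ at $y$ with $\angle\langle y,y-x\rangle$ and the passage from the sine bound to the angle bound, both handled above. The generous factor $2^{10}$ in the statement is presumably chosen so that in the subsequent use for Theorem \ref{counterexample} --- where one wants the pattern's smallest angle $\alpha$ to exceed $2^{10}\alpha'$ --- this is automatic from the choice $\alpha'=\alpha/(2^{100}d)$.
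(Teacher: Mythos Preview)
Your argument is correct and notably cleaner than the paper's. The paper works in spherical coordinates, writing $x,y,y'$ as $\|\cdot\|(\cos\theta\, e_1 + \sin\theta\, v)$ with $\theta\leq\alpha'/2$ and $v\perp e_1$, then expands the inner product $(y-x)\cdot(y'-x)$ term by term and argues that once $\|y\|,\|y'\|\gg\|x\|$ the dominant contribution gives $\cos\phi\geq\cos\alpha'$. Your route via the triangle inequality for the angular metric together with the planar law of sines avoids all of that expansion and makes the geometric mechanism transparent: shifting the base point from $0$ to $x$ perturbs each direction $y/|y|$ by an angle of order $|x|/|y-x|$, which is negligible once $|y-x|\geq M$.

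One small point deserves an extra sentence. After obtaining $\sin\angle\langle y-x,y\rangle\leq\alpha'/1000$ you invoke Jordan's inequality on $[0,\pi/2]$, which presupposes the angle is acute rather than near $\pi$. This is easily justified: in the triangle $0,x,y$ the side opposite vertex $x$ has length $|y|\geq |y-x|-|x|\geq 999|x|$, which exceeds the length $|x|$ of the side opposite vertex $y$; hence the interior angle at $y$ is the smaller of the two non-apex angles and therefore below $\pi/2$. With that one line added the argument is complete --- and, incidentally, the ordering hypothesis $|y-x|\leq|y'-x|$ is never used, in your proof or the paper's.
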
 
We assume the lemma is true for the moment.
For $x\in  C(\alpha')$, let $R(x)$ be $2M(x,\alpha')$ in Lemma \ref{xianrandedongxi}. We want to show for all $R>R(x)=2M(x,\alpha'),$ there is no $R$-dilated affine copy of $V$ with $x$ as one of its vertices. Assume by contradiction that there is a $R$-dilate copy $V^R\subseteq E.$ Then the length of the shortest side is $r_{min}R=R.$ If $x$ is one of its vertex, then for any other vertices $y,y'\in V^R\setminus\{x\}$, 
\begin{equation*}
    |x-y|\geq R>R(x)=2M(x,\alpha')\quad \text{and}\quad |x-y'|\geq R>R(x)=2M(x,\alpha').
\end{equation*}
By Lemma \ref{xianrandedongxi}, 
\begin{equation*}
     \angle\langle y-x,y'-x\rangle\leq 2^{10}\alpha'=\frac{\alpha}{2^{90}d}<\alpha.
\end{equation*}
% if $R_i$ rapidly increasing. By a simple contradiction argument, this means that 
% \begin{equation*}
%     V^R\setminus\{x\}\subseteq \bigsqcup_{j\geq M+1}B_j,
% \end{equation*}
% if we furthermore require $|x|\leq R_M$. Indeed, if there is some $j\leq M$ such that $y_0\in V^R\cap B_j$. Then 
% \begin{equation*}
%     10R_M<|x-y_0|\leq |x|+|y_0|<10R_M.
% \end{equation*}
% In the last inequality we use $R_i$ is rapidly increasing so $|y_0|\leq 2R_M$.
% From \eqref{maxangle}, the angles formed by all $y\in V^R\setminus\{x\}$ and $x$ is strictly smaller than $\alpha$ 
which contradicts the assumption that the smallest angle of $V$ is $\alpha$.

This concludes our construction for the counterexample.
\end{example}

Now it remains to prove Lemma \ref{xianrandedongxi}.
\begin{proof}[Proof of Lemma \ref{xianrandedongxi}]
Let $x \in C(\alpha')$ be fixed.
    We first parameterize points $y, y' \in C(\alpha')$ via spherical coordinates with ${e}_1$ as the axis:
\begin{align*}
y &= \|y\|\left(\cos\theta_y {e}_1 + \sin\theta_y {v}_y\right), \\
y' &= \|y'\|\left(\cos\theta_{y'} {e}_1 + \sin\theta_{y'} {v}_{y'}\right),
\end{align*}
where $\theta_y, \theta_{y'} \leq \alpha'/2$, ${v}_y, {v}_{y'} \perp {e}_1$, and $\|{v}_y\| = \|{v}_{y'}\| = 1$.
Similarly, decompose $x$ as:
$$
x = \|x\|\left(\cos\theta_x {e}_1 + \sin\theta_x {v}_x\right), \quad \theta_x \leq \alpha'/2.
$$
As a result,
\begin{equation}\label{aaaa}
    \begin{aligned}
y - x &= \left(\|y\|\cos\theta_y - \|x\|\cos\theta_x\right){e}_1 + \left(\|y\|\sin\theta_y {v}_y - \|x\|\sin\theta_x {v}_x\right), \\
y' - x &= \left(\|y'\|\cos\theta_{y'} - \|x\|\cos\theta_x\right){e}_1 + \left(\|y'\|\sin\theta_{y'} {v}_{y'} - \|x\|\sin\theta_x {v}_x\right).
\end{aligned}
\end{equation}
If we denote $\phi=\angle\langle y'-x,y-x\rangle$, then
\begin{equation}\label{eq:cos_angle}
\cos\phi = \frac{(y' - x) \cdot (y - x)}{\|y' - x\| \cdot \|y - x\|}.
\end{equation}
Plug equation \eqref{aaaa} to the numerator of equation \eqref{eq:cos_angle}. The coefficient of the ${e}_1$ term,
\begin{equation*}
    \begin{aligned}
        \left(\|y\|\cos\theta_y - \|x\|\cos\theta_x\right)&\left(\|y'\|\cos\theta_{y'} - \|x\|\cos\theta_x\right)
    \end{aligned}
\end{equation*}
is dominated by $\|y\|\|y'\|\cos\theta_y \cos\theta_{y'}$ when $\|y\|, \|y'\| \gg \|x\|$.
For the remaining terms,
\begin{align*}
    &\left|\left(\|y\|\sin\theta_y {v}_y - \|x\|\sin\theta_x {v}_x\right) \cdot \left(\|y'\|\sin\theta_{y'} 
    {v}_{y'} - \|x\|\sin\theta_x {v}_x\right)\right|\\
    &\leq \|y\|\|y'\|\sin\theta_y \sin\theta_{y'} + O(\|x\|(\|y\| + \|y'\|)).
\end{align*}
When $\norm{y}$ and $\norm{y'}$ are sufficiently large (depending on $x$ and $\alpha'$), the term $\|y\|\|y'\|\sin\theta_y \sin\theta_{y'}$ dominates.
% $$
% \left|\left(\|y\|\sin\theta_y {v}_y - \|x\|\sin\theta_x {v}_x\right) \cdot \left(\|y'\|\sin\theta_{y'} {v}_{y'} - \|x\|\sin\theta_x {v}_x\right)\right| \leq \|y\|\|y'\|\sin\theta_y \sin\theta_{y'} + O(\|x\|(\|y\| + \|y'\|)).
% $$

Combining the terms in equation \eqref{eq:cos_angle}, we have 
$$
\cos\phi \geq \frac{\|y\|\|y'\|\cos(\theta_y + \theta_{y'}) - O(\|x\|(\|y\| + \|y'\|))}{\|y\|\|y'\|}.
$$
Since $\theta_y + \theta_{y'} \leq \alpha'$ and the continuity and monotonicity of $\cos$ function, when $\|y\|, \|y'\| \geq M(x,\alpha') \gg \|x\|$:
\begin{align*}
  \cos\phi \geq \cos\alpha' \quad \Rightarrow \quad \phi \leq \alpha'<2^{10}\alpha'.&\qedhere  
\end{align*}

\end{proof}

\section{Proof of Theorem \ref{MainTheorem}}\label{SectionMainTheorem}
We prove Theorem \ref{MainTheorem} in the following way. First, the arbitrarily chosen fixed base point $x$ can be assumed to be the origin. Therefore, it suffices to prove a ``pinned at the origin" version of Theorem \ref{MainTheorem}, which is the following Proposition \ref{MainLemma}. To prove Proposition \ref{MainLemma}, we need to use Szemer\'edi’s theorem via Gowers bounds. We reformulate the theorem to fit our setting in Lemma \ref{szmelemma}. With this lemma in hand, we split the case $d\geq 2$ in Proposition \ref{MainLemma} into two sub-cases: one is $d=2$ and the other one is $d\geq 3$. For the case of $d=2$, we reduce the problem to a pattern avoidance problem, where Lemma \ref{szmelemma} can be applied. For the case of $d=3$, we use a polar coordinate argument to lower the dimension to $2$ and repeat our reasoning in the planar case.
\subsection{Reduction to ``pinned at the origin" version}\label{Section:MainLemma}As observed in \cite{wang_2025_PinnedPatternI}, due to the definition of upper density, the main theorem is equivalent to the following ``pinned at the origin" version.
\begin{proposition}[main lemma]\label{MainLemma}
      For $d\geq 2 $, $ \eps_0>0$ and $k\geq 3$, there exist a finite set $\mathcal V=\mathcal V(d,k,\eps_0)$ of  $k$-point patterns  and a positive number $\tilde \eps(\eps_0,k,d)$ such that 
          the following holds.
         % Let $M_{d}=10^{10}\pi C_d^2$ for $k=3$ and $M_d=10C_d$ for $k\geq 4$. Suppose 
        For all $A\subseteq \R^d$ with $\delta(A)\geq\eps_0$, there is
    a pattern $V\in \mathcal V$
    % and $|p_i|=1,\forall i$
    such that 
    % the upper density of the pinned scaling factor set $D_x^V(A)$ satisfies 
    \begin{equation*}
       \delta(D_0^V(A))\geq{\tilde \eps(\eps_0,k,d)}>0.
    \end{equation*}
\end{proposition}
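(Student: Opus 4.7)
My plan is to combine the spherical-integration strategy of \cite{wang_2025_PinnedPatternI} with the quantitative torus version of Szemer\'edi's theorem (Lemma \ref{szmelemma}) to locate $(k-1)$-term arithmetic progressions in the angular slices of $A$. The catalog $\mathcal V$ will consist of ``isosceles'' configurations---the origin together with $k-1$ points on a unit circle whose angular coordinates form an AP---with the common difference discretized into $N=N(\eps_0,k,d)$ values.

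\textbf{Planar case ($d=2$).} For each $r>0$ let $S_r := \{\theta \in \R/2\pi\Z : r(\cos\theta,\sin\theta) \in A\}$. The hypothesis $\delta(A)\geq \eps_0$ combined with polar coordinates yields $\int_0^R r\,\mathcal L^1(S_r)\,dr \geq (\eps_0/2)\pi R^2$ along a subsequence $R=R_n\to\infty$, and Cauchy--Schwarz upgrades this to $\int_0^R \mathcal L^1(S_r)^2\,dr \geq c\,\eps_0^2\, R$. For $k=3$ the second-moment identity
\[
\int_0^R \mathcal L^1(S_r)^2\,dr \;=\; \int_0^{2\pi}\int_0^R\int_0^{2\pi}\mathbf 1_A(r u_\theta)\,\mathbf 1_A(r u_{\theta+\beta})\,d\theta\,dr\,d\beta
\]
(with $u_\theta = (\cos\theta,\sin\theta)$) supplies, after pigeonholing $\beta$ into bins and swapping integration order, a positive-upper-density contribution to the pinned scaling factor set of a representative pattern $V_{j^*}$. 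For $k\geq 4$ the same game is played with the $(k-1)$-fold product $\prod_{i=0}^{k-2}\mathbf 1_A(r u_{\theta+i\beta})$, the inner $L^1(d\theta)$ lower bound coming from Lemma \ref{szmelemma}; the constant $c_{k-1}$ from Gowers's proof then propagates into $\tilde\eps$ as shown in \eqref{epseps0k}.

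\textbf{Case $d\geq 3$.} I plan to use polar coordinates on $\R^d$ together with Fubini over 2-planes $\Pi$ through the origin. Integrating out the $(d-2)$-dimensional stabilizer of each plane shows that a positive-measure family of planes $\Pi$ satisfies $\delta(A\cap\Pi)\geq c_d\,\eps_0$, and the planar argument applies verbatim inside each such $\Pi$; the isometry $O\in\mathcal O(d)$ in \eqref{psfs} absorbs the rotational freedom of the containing plane.

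\textbf{Main obstacle.} The delicate step is turning the conclusion ``an AP of \emph{some} common difference in the bin $I_{j^*}$ exists at many $r$'' into an \emph{exact} realization of the single representative pattern $V_{j^*}\in\mathcal V$. My plan is to choose $N$ carefully as a function of $\eps_0,k,d$, carry out the pigeonhole jointly over $(\theta,\beta,r)$, and swap orders of integration so that one bin of $\beta$ witnesses a positive fraction of the mass. The multiplicative loss in this discretization is what determines the explicit form of $\tilde\eps$, and I expect this bookkeeping---together with the tower-type constant from Szemer\'edi's theorem---to account for the iterated-exponential dependence stated in the remark.
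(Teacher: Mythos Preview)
Your strategy has the right ingredients, but the execution you sketch creates the very obstacle you flag at the end. The paper never bins the angular difference $\beta$: the catalog $\mathcal V$ consists of patterns $V_i^k$ whose $k-1$ non-origin points sit at the \emph{exact} angles $0,\tfrac{2\pi i}{n+1},\dots,(k-2)\tfrac{2\pi i}{n+1}$ for $i=1,\dots,n$ (with $n+1$ prime), and the argument runs by contradiction. If every $\delta(D_0^{V_i^k}(A))$ were below $\tilde\eps$, then for $r$ outside the small union $\bigcup_i D_0^{V_i^k}(A)$ the angular slice $E_r\subseteq\R/2\pi\mathbb Z$ would avoid every $(k-1)$-AP with common difference in the finite list $\{2\pi i/(n+1):1\le i\le n\}$. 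This is precisely the hypothesis of Lemma \ref{szmelemma}, which forces $\mathcal L^1(E_r)$ small; integrating in $r$ then contradicts $\delta(A)\ge\eps_0$. Because the catalog is discrete from the outset and Lemma \ref{szmelemma} is tailored exactly to this discrete avoidance hypothesis (it fibers $\R/2\pi\mathbb Z$ over $[0,\tfrac{2\pi}{n+1})$ with $\mathbb Z/(n+1)\mathbb Z$ fibres and invokes Szemer\'edi on each fibre), there is no approximation step and your ``main obstacle'' simply does not arise.

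Two smaller points. First, Lemma \ref{szmelemma} is an \emph{upper} bound on $\mathcal L^1(E_r)$ under an avoidance hypothesis, not a lower bound on the $(k-1)$-fold AP-counting integral; the paper uses it only in this contrapositive direction, and no Varnavides-type count is needed or proved. Second, your $d\ge 3$ reduction via ``$\delta(A\cap\Pi)\ge c_d\eps_0$ for many $2$-planes $\Pi$'' is ill-posed as written, since a generic planar slice of a measurable $A\subseteq\R^d$ has no canonical two-dimensional density. The paper instead writes the surface measure on $r\mathbb S^{d-1}$ in spherical coordinates so that fixing the $d-2$ auxiliary angles $\boldsymbol\alpha$ yields a great circle $\mathbf S_r(\boldsymbol\alpha)$, rotates that circle into $r\mathbb S^1\times\{0\}$ by some $O_{\boldsymbol\alpha}\in\mathcal O(d)$, and reruns the planar slice argument circle by circle; the integration over $\boldsymbol\alpha$ is what replaces your Fubini over planes.
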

To see this, we record the following translation invariance lemma from \cite{wang_2025_PinnedPatternI}, whose proof is direct.
\begin{lemma}[translation invariance]\label{translationinvariance}
    For all $A\subseteq \R^d$ and $x\in \R^d$, $\delta(A-x)=\delta(A).$
\end{lemma}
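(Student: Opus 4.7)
The goal is to show that shifting a set by a fixed vector $x$ does not affect its upper density. The approach is a straightforward change of variables combined with a ball-inclusion argument that exploits the fact that the ``correction'' from re-centering a ball at $x$ instead of at $0$ becomes negligible compared to $R^d$ as the radius $R$ tends to infinity.

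First, I would rewrite $\mathcal L^d(B(0,R)\cap(A-x))$ using the translation invariance of Lebesgue measure. Substituting $z=y+x$ in the defining integral gives the identity
\begin{equation*}
\mathcal L^d\bigl(B(0,R)\cap(A-x)\bigr)=\mathcal L^d\bigl(B(x,R)\cap A\bigr),
\end{equation*}
so the problem reduces to comparing averages over $B(x,R)$ with averages over $B(0,R)$. Next, for any $R>|x|$, the inclusions $B(0,R-|x|)\subseteq B(x,R)\subseteq B(0,R+|x|)$ immediately yield the sandwich
\begin{equation*}
\mathcal L^d\bigl(B(0,R-|x|)\cap A\bigr)\;\le\;\mathcal L^d\bigl(B(x,R)\cap A\bigr)\;\le\;\mathcal L^d\bigl(B(0,R+|x|)\cap A\bigr).
\end{equation*}

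Dividing through by $R^d$ and rewriting the right-hand side as $\bigl((R+|x|)/R\bigr)^{d}$ times $\mathcal L^d(B(0,R+|x|)\cap A)/(R+|x|)^d$ (and analogously on the left), I would then pass to $\limsup_{R\to\infty}$. The scaling factors $\bigl((R\pm|x|)/R\bigr)^{d}$ tend to $1$, and since limsup of a bounded sequence multiplied by a sequence converging to $1$ equals the limsup of the original sequence, reparameterizing $R'=R\pm|x|$ (which still ranges over $(|x|,\infty)$ with $R'\to\infty$) yields
\begin{equation*}
\delta(A)\;\le\;\limsup_{R\to\infty}\frac{\mathcal L^d(B(x,R)\cap A)}{R^d}\;\le\;\delta(A).
\end{equation*}
Combining this with the initial identity gives $\delta(A-x)=\delta(A)$.

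I do not expect any serious obstacle here; the only minor care point is the interchange between $\limsup$ and the factor $\bigl((R\pm|x|)/R\bigr)^{d}$, which is handled by the elementary fact that $\limsup(a_R b_R)=\limsup a_R$ whenever $b_R\to 1$ and $a_R$ is bounded (bounded because $\mathcal L^d(B(0,R)\cap A)/R^d\le \mathcal L^d(B(0,R))/R^d=c_d$). The proof is short, purely geometric, and requires no tools beyond the translation invariance of Lebesgue measure and the definition of $\delta$.
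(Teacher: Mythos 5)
Your proof is correct and is exactly the ``direct'' argument the paper alludes to (the paper itself omits the proof, citing it as immediate): translation invariance of Lebesgue measure plus the sandwich $B(0,R-|x|)\subseteq B(x,R)\subseteq B(0,R+|x|)$ and the fact that $\bigl((R\pm|x|)/R\bigr)^d\to 1$. No issues.
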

\begin{proof}[Proof of Theorem \ref{MainTheorem} assuming Proposition \ref{MainLemma}]
    By the conditions of Theorem \ref{MainTheorem}, $\delta(A)\geq \eps_0$ and $x\in A$. By Lemma \ref{translationinvariance}, $\delta(A-x)=\delta(A)\geq \eps_0.$ Then the set $A-x$ satisfies the condition of Proposition \ref{MainLemma}. Applying Proposition \ref{MainLemma} to $A-x$, we obtain that there is a pattern $V$ such that 
    \begin{equation*}
        \delta(D_0^V(A-x))\geq \tilde{\eps}(\eps_0,k,d)>0.
    \end{equation*}
    This concludes the proof since $D_0^V(A-x)=D_x^V(A)$.
\end{proof}
% A similar argument can also be found in \cite{wang_2025_PinnedPatternI}.

To prove Proposition \ref{MainLemma}, we adapt Szemer\'edi's theorem to our setting. 
\subsection{Adaption of Szemer\'edi's theorem} The main result of this section is the following.

\begin{lemma}[size of sets avoiding $(k-1)$-AP]\label{szmelemma}
      Fix $k\geq 3$. Let $n+1$ be a large prime number such that $n\gg k$. If $E\subseteq \R/2\pi \mathbb Z=[0,2\pi)$ satisfies for any fixed $x\in \R/2\pi\mathbb Z$ and $i\in \{1,2,...,n\}$,
   \begin{equation}\label{Nok-1AP}
       \left\{x,x+\frac{2\pi i}{n+1},x+2\cdot\frac{2\pi i}{n+1},...,x+(k-2)\cdot\frac{2\pi i}{n+1}\right\}\not\subseteq E.
   \end{equation}
        Then 
\begin{equation}\label{hehele222222}
       \mathcal L^1(E)\leq \begin{cases}
           \frac{2\pi}{n+1}, &k=3,\\
           \frac{2\pi}{(\log\log (n+1))^{c_{k-1}}
           }, & k\geq 4, 
       \end{cases}
   \end{equation}
   where $c_{k-1}$ is defined in \eqref{QuantitativeSzemeredi}.
\end{lemma}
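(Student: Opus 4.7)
The plan is to reduce the continuous measure bound on $\mathcal{L}^1(E)$ to a discrete cardinality bound on $\mathbb{Z}/(n+1)\mathbb{Z}$, after which I can invoke the quantitative Szemer\'edi bound from Appendix \ref{Appendix:Szemeredi's theorem}. Setting $a := 2\pi/(n+1)$, every point of $[0,2\pi)$ has a unique representation $t + ja$ with $t \in [0,a)$ and $j \in \{0,1,\ldots,n\}$. For each $t \in [0,a)$, define
\begin{equation*}
S_t := \{\, j \in \mathbb{Z}/(n+1)\mathbb{Z} : t + ja \!\!\!\pmod{2\pi} \in E \,\},
\end{equation*}
so that Fubini yields $\mathcal{L}^1(E) = \int_0^a |S_t|\, dt$. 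It then suffices to bound $|S_t|$ uniformly in $t$.

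The case $k=3$ is elementary. The hypothesis \eqref{Nok-1AP} says that for every $i \in \{1,\ldots,n\}$ the translate $E + 2\pi i/(n+1)$ is disjoint from $E$ in $\mathbb{R}/2\pi\mathbb{Z}$. Since $\{1,\ldots,n\}$ is precisely the set of nonzero residues modulo $n+1$, the $n+1$ translates $E, E+a, E+2a, \ldots, E+na$ are pairwise disjoint in the torus, so summing their measures gives $(n+1)\mathcal{L}^1(E) \leq 2\pi$.

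For $k \geq 4$ I claim each $S_t$ contains no $(k-1)$-term arithmetic progression in $\mathbb{Z}/(n+1)\mathbb{Z}$. Indeed, any such progression $\{j_0, j_0+i, \ldots, j_0+(k-2)i\}$ with $i \in \{1,\ldots,n\}$ lifts under the identification to $\{x, x + 2\pi i/(n+1), \ldots, x + (k-2)\cdot 2\pi i/(n+1)\} \subseteq E$ with $x = t + j_0 a$, contradicting \eqref{Nok-1AP}; the hypothesis $n \gg k$ ensures these $k-1$ residues stay distinct mod $n+1$. The quantitative Szemer\'edi bound \eqref{QuantitativeSzemeredi}, applied in the cyclic group $\mathbb{Z}/(n+1)\mathbb{Z}$, then gives $|S_t| \leq (n+1)/(\log\log(n+1))^{c_{k-1}}$, and integrating in $t$ yields the stated bound $\mathcal{L}^1(E) \leq 2\pi/(\log\log(n+1))^{c_{k-1}}$.

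The only mild subtlety lies in matching wrap-around progressions in $\mathbb{Z}/(n+1)\mathbb{Z}$ with genuine $(k-1)$-APs on the torus whose common differences belong to the prescribed set $\{2\pi i/(n+1):i\in\{1,\ldots,n\}\}$. This is automatic because \eqref{Nok-1AP} quantifies over all base points $x \in \mathbb{R}/2\pi\mathbb{Z}$ rather than only lattice points, so no separate case analysis is needed; primality of $n+1$ simply ensures that every nonzero common difference is invertible in the cyclic group, so the length-$(k-1)$ APs remain nondegenerate.
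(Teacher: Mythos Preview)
Your proof is correct and follows essentially the same route as the paper: both decompose $\mathcal{L}^1(E)$ via Fubini over cosets of $\frac{2\pi}{n+1}\mathbb{Z}$, observe that the resulting fiber set in $\mathbb{Z}/(n+1)\mathbb{Z}$ is $(k-1)$-AP--free, and then invoke Theorem~\hyperref[SzemerediTheorem]{D}. The only cosmetic difference is that for $k=3$ you phrase the bound as a disjoint-translates argument, whereas the paper applies the uniform framework with $r_2(\mathbb{Z}/(n+1)\mathbb{Z})\leq 1$; these are the same observation.
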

    Let us pause to explain the lemma. 
    \begin{itemize}
        \item Since we view each number from the set of left hand side of \eqref{Nok-1AP} as an element in $\R/2\pi\mathbb Z$. Naturally, $x+j\cdot\frac{2\pi i}{n+1}= x+j\cdot\frac{2\pi i}{n+1}\bmod{2\pi}$ for each $i\in \{1,...,n\}$ and $j\in \{0,...,k-2\}$.
        \item For each fixed $k$, the set of left hand side of \eqref{Nok-1AP} is exactly a $(k-1)$-term arithmetic progression, with common difference $\frac{2\pi i}{n+1}$. As $i$ ranges over all $\{1,...,n\}$, the set rages over all possible ``$(k-1)$-term APs". Therefore, roughly speaking, the condition says that $E$ avoids all ``$(k-1)$-term APs". And the conclusion of the lemma gives a quantitative upper bound for the Lebesgue measure of such a set.
    \end{itemize}  
    In addition, note that since $n+1$ is a prime number, fix $i\in \{1,...,n\}$, $x+j\cdot \frac{2\pi i}{n+1},j=0,1,...,(k-2)$ are distinct in $\R/2\pi\mathbb Z$. 
    % Therefore, they form a $k$-A.P. with common difference $\frac{2\pi i}{n+1}$. 
% The proof is a direct application of Theorem \hyperref[SzemerediTheorem]{D}.
% \begin{remark}
%     If can be directly verify that \eqref{noap} is equivalent to the following condition
%       \begin{equation}\label{Nok-1AP}
%             \prod_{j=0}^{k-2}\chi_E(x+\frac{j2\pi i}{n+1})=0,~\forall i\in \{1,...,n\}.
%         \end{equation}
%         So we do not distinguish the two conditions \eqref{noap} and \eqref{Nok-1AP}.
% \end{remark}

\begin{proof}
    By change of variable, \begin{equation}\label{last???} 
        \begin{aligned}
              \mathcal L^1(E)&=\int_{\R/2\pi \mathbb Z}\chi_E(x)dx\\
              &=\int_0^{\frac{2\pi}{n+1}}\sum_{\tau=0}^{n}\chi_E(x+\frac{\tau2\pi}{n+1})dx\\
               &=\int_0^{\frac{2\pi}{n+1}}\sum_{\tau=0}^{n}\chi_{E-x}(\frac{\tau2\pi}{n+1})dx
        \end{aligned}
    \end{equation}
     Fix each $x\in (0,\frac{2\pi}{n+1})$. Since $\{\frac{\tau 2\pi}{n+1},\tau\in\{0,1,...,n\}\}\cong \mathbb Z/(n+1)\mathbb Z$. The set $E-x$ can be viewed as a subset of $\mathbb Z/(n+1)\mathbb Z$ under the canonical correspondence. Therefore, 
     \begin{equation}\label{computation1}
         \begin{aligned}
                    \mathcal L^1(E)&= \int_0^{\frac{2\pi}{n+1}}\sum_{\tau=0}^{n}\chi_{E-x}(\frac{\tau2\pi}{n+1})dx\\  
                    &=\int_{0}^{\frac{2\pi}{n+1}}\left(\int_{\mathbb Z/(n+1)\mathbb Z} \chi_{E-x}(\frac{\tau 2\pi }{n+1})d\#\tau \right)dx.
         \end{aligned}
     \end{equation}
     In the above expression, $\tau$ is viewed as an element in $\mathbb Z/(n+1)\mathbb Z$. We claim that the set 
     \begin{equation*}
        B:= \left\{\tau \in \mathbb Z/(n+1)\mathbb Z:\frac{2\pi \tau}{n+1}\in E-x\right\}
     \end{equation*}
     is a subset of $\mathbb Z/(n+1)\mathbb Z$ without $(k-1)$-A.P.. If not, by our Definition \ref{m-ap}, there are distinct 
     \begin{equation*}
         \tau_1,\tau_2,...,\tau_{k-1}\in \mathbb Z/(n+1)\mathbb Z,\quad \tau_{i+1}-\tau_i=d\mod n+1
     \end{equation*}
     such that 
     \begin{equation}\label{APinZmodn+1Z}
         \frac{2\pi\tau_j}{n+1}\in E-x,\quad \forall j=1,2,...,k-1.
     \end{equation}
    Note that in $\mathbb Z/(n+1)\mathbb Z$, 
    \begin{equation*}
        {\tau_j}=\tau_1+(j-1)d.
    \end{equation*}
     Therefore, in $\R/2\pi\mathbb Z$, 
     \begin{equation*}
         2\pi \tau_i=2\pi(\tau_1+(j-1)d)\Rightarrow \frac{2\pi \tau_j}{n+1}=\frac{2\pi(\tau_1+(j-1)d)}{n+1}.
     \end{equation*}
      Hence \eqref{APinZmodn+1Z} implies that for all $j=1,2,...,k-1,$
     \begin{equation*}
        \R/2\pi\mathbb Z\supseteq E\ni x+\frac{2\pi \tau_j}{n+1}=x+\frac{2\pi(\tau_1+(j-1)d)}{n+1}=\Big(x+\frac{2\pi\tau_1}{n+1}\Big)+\frac{(j-1)d}{n+1},
     \end{equation*}
     % which means 
     % \begin{equation*}
     %     \chi_E(x+\frac{2\pi [\tau_1+({i-1)}d]}{n+1})=\chi_E(x+\frac{2\pi \tau_1}{n+1}+ \frac{(i-1)d}{n+1})=1,~\forall i=1,2,...,k-1.
     % \end{equation*}
     This means that 
     \begin{equation*}
           \left\{\Big(x+\frac{2\pi\tau_1}{n+1}\Big)+\frac{(j-1)d}{n+1}\right\}_{j=1}^{k-1}\subseteq E
     \end{equation*}
     % \begin{equation*}
     %        \prod_{j=0}^{k-2}\chi_E(x+\frac{2\pi\tau_1}{n+1}+\frac{j2\pi d}{n+1})=1,
     % \end{equation*}
     which contradicts \eqref{Nok-1AP} with $x=x+\frac{2\pi\tau_1}{n+1},i=d$.

     Therefore, return to our computation \eqref{computation1},
   \begin{equation*}
         \begin{aligned}
                     \mathcal L^1(E)&=\int_{0}^{\frac{2\pi}{n+1}}\left(\int_{\mathbb Z/(n+1)\mathbb Z} \chi_{E-x}(\frac{\tau 2\pi }{n+1})d\#\tau \right)dx\\
                    &=\int_{0}^{\frac{2\pi}{n+1}}\left(\int_{\mathbb Z/(n+1)\mathbb Z} \chi_{B}(\tau)d\#\tau \right)dx\\
                    &=\int_{0}^{\frac{2\pi}{n+1}} \# B dx\\
         \end{aligned}
     \end{equation*}
     Apply Theorem \hyperref[SzemerediTheorem]{D} to bound $\#B$, 
     \begin{equation*}
         \begin{aligned}
             \mathcal L^1(E)& \leq \int_{0}^{\frac{2\pi}{n+1}} r_{k-1}(\mathbb Z/(n+1)\mathbb Z) dx\\
                    &\leq \begin{cases}
           \frac{2\pi}{n+1}, &k=3,\\
           \frac{2\pi}{(\log\log (n+1))^{c_{k-1}}
           }, & k\geq 4.
       \end{cases}         
         \end{aligned}
     \end{equation*}
\end{proof}

\subsection{Proof of Proposition \ref{MainLemma}}
Now we start to prove Proposition \ref{MainLemma}. We first prove it in the planar case. The higher dimensional case can be deduced with a little additional effort. In what following, we are actually proving the following result.
\begin{proposition1'}
        For $d\geq 2 $, $ \eps_0>0$ and $k\geq 3$, there is a finite set of  $k$-point patterns $\mathcal V=\mathcal V(d,k,\eps_0)$ and a constant $M_{d}$ depending on $d$ and $k$ such that the following holds. Suppose $A\subseteq \R^d$ with $\delta(A)\geq\eps_0$ and $0\in A$. Then there is
    a pattern $V\in \mathcal V$
    % and $|p_i|=1,\forall i$
    such that the upper density of the pinned scaling factor set satisfies 
    \begin{equation}\label{estforpin0}
       \delta(D_0^V(A))\geq\frac{\eps(\eps_0,k,d)}{M_{d}}.
    \end{equation}
\end{proposition1'}
We first record the following elementary result in linear algebra.
\begin{lemma}\label{xuanzhuan}
$d\geq 2,~m\geq 1$ are two integers. For two groups of coplanar vectors of the same length $\ell$, $U=\{u_1,...,u_m\}\subseteq \ell\mathbb S^{d-1}$ and $W=\{w_1,...,w_m\}\subseteq \ell\mathbb S^{d-1}$. Assume 
\begin{equation}
       u_i=\ell(\cos \alpha_i,\sin\alpha_i,0...,0),\quad w_i=\ell(\cos\beta_i,\sin\beta_i,0,...,0),
\end{equation}
$\alpha_i<\alpha_{i+1}, \beta_i<\beta_{i+1}$ for all $i$. If 
\begin{equation*}
    \alpha_{i+1}-\alpha_i=\beta_{i+1}-\beta_i,
\end{equation*}
then there is isometry $O\in \mathcal O(d)$ such that 
\begin{equation*}
    O(U)=W.
\end{equation*}
\end{lemma}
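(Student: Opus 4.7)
The plan is to produce the required isometry explicitly as a rotation in the coordinate plane spanned by $e_1$ and $e_2$, with identity action on the orthogonal complement. The key observation is that the hypothesis $\alpha_{i+1} - \alpha_i = \beta_{i+1} - \beta_i$ for every admissible $i$ telescopes: rearranging gives $\beta_{i+1} - \alpha_{i+1} = \beta_i - \alpha_i$, so the quantity $\gamma := \beta_i - \alpha_i$ is independent of $i$. In other words, the angular sequences $(\beta_i)$ and $(\alpha_i)$ differ by a single global shift $\gamma$, and the vectors $w_i$ are precisely what one obtains by rotating the vectors $u_i$ through the angle $\gamma$ inside the common $2$-plane in which both families live.

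With this in hand, I would define $O \in \mathcal{O}(d)$ in block form: on $\mathrm{span}(e_1, e_2)$ it acts as the planar rotation matrix $\bigl(\begin{smallmatrix} \cos\gamma & -\sin\gamma \\ \sin\gamma & \cos\gamma \end{smallmatrix}\bigr)$, and on $\mathrm{span}(e_3, \ldots, e_d)$ it acts as the identity. This is manifestly an element of $SO(d) \subseteq \mathcal{O}(d)$. Applying $O$ to $u_i = \ell(\cos\alpha_i, \sin\alpha_i, 0, \ldots, 0)$ and using the sum-of-angles identities for sine and cosine yields $O(u_i) = \ell(\cos(\alpha_i + \gamma), \sin(\alpha_i + \gamma), 0, \ldots, 0) = \ell(\cos\beta_i, \sin\beta_i, 0, \ldots, 0) = w_i$, so $O(U) = W$ as required.

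There is no real obstacle in this argument; the only content is noticing that equal consecutive differences force a uniform angular shift, after which the construction of $O$ and its verification are mechanical. The lemma is elementary but worth stating separately because it is exactly what will let us translate the combinatorial output of Lemma \ref{szmelemma} (equally spaced angles on the circle) into the existence of an honest isometry matching two configurations of vectors, which is the shape of hypothesis needed by the definition of $D_0^V(A)$ in \eqref{psfs}.
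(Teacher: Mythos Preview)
Your proof is correct and follows essentially the same approach as the paper: both construct the required isometry as a planar rotation in $\mathrm{span}(e_1,e_2)$ by the angle $\beta_1-\alpha_1$ (your $\gamma$), extended by the identity on the orthogonal complement. Your presentation is in fact slightly cleaner, since you spell out the telescoping that makes $\gamma$ independent of $i$ and you avoid the unnecessary ``swap $U$ and $W$ if necessary'' step (a rotation by a negative angle is still in $SO(d)$).
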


\begin{proof}
Swapping $U$ and $V$ if necessary, we can assume $\alpha_1\leq \beta_1$. It can be directly checked that the orthogonal matrix  
\begin{equation*}
 O'=  \begin{bmatrix}
\cos \left( \beta_1 - \alpha_1 \right) & -\sin \left( \beta_1 - \alpha_1  \right) & 0 & \cdots &0 \\
\sin \left( \beta_1 - \alpha_1  \right) & \cos \left( \beta_1 - \alpha_1  \right) & 0 & \cdots &0 \\
0& 0 & 0 & \cdots &0 \\
\vdots& \vdots & \vdots & \ddots & \vdots \\
0& 0 & 0 & \cdots &0
\end{bmatrix}
\end{equation*}
sends $U$ to $W$. 
\end{proof}

 \subsubsection{Case $d=2$}\label{PlanarCase}
   Fix $k\geq 3$. For $d=2$, assume the conclusion is false, which means there is a set $A$ with $\delta(A)\geq \eps_0$ such that for all finite sets of patterns $\mathcal V$ and all $V\in\mathcal V$, equation \eqref{estforpin0} does not hold, which is 
\begin{equation*}
    \delta(D_0^V(A))<\frac{\eps(\eps_0,k,d)}{M_{d}}
\end{equation*}
    
 Let $n=n(d,k,\eps_0)\gg1$ be a prime integer to be determined later. We choose $\mathcal V=\{V_i^k,~i=1,2,...,n\}$ as follows 

 \begin{equation*}
        V_i^k=\left\{0,e_1,\left(\cos\frac{2\pi i}{n+1},\sin\frac{2\pi i}{n+1}\right),...,\left(\cos\frac{(k-2)2\pi i}{n+1},\sin\frac{(k-2)2\pi i}{n+1}\right)\right\}\subseteq \R^d,~i=1,...,n
    \end{equation*}
    % For all
    % % $c_{\eps_0}>0$ and 
    % non-degenerate patterns of the form  
    % % $V_i^k=\{0,p_2^i,...,p_d^i\},i=1,2,...,n$,  where 
    % when $k=3$,
    % \begin{equation*}
    %     V_i^k=\{0,e_1,\left(\cos\frac{2\pi i}{n+1},\sin\frac{2\pi i}{n+1},0)\right\}\subseteq \R^d,~i=1,...,n
    % \end{equation*}
    % $e_i$ is the unit vector pointing to $x_i$-axis and $p^i_d=(\cos(\frac{22\pi i}{n}), \sin(\frac{22\pi i}{n}),0,...,0),$
    and they satisfy that  
    \begin{equation}\label{nononocontradiction}
         \delta(D^{V_i^k}_0(A))< \frac{\eps(\eps_0,k,d)}{M_{d}},~ \forall i=1,2,...,n.
    \end{equation}
    The pattern for $k=3$ and $4$ can be found in Figure \ref{figpatternin3}.
    \begin{figure}[h]
        \centering

\tikzset{every picture/.style={line width=0.75pt}} %set default line width to 0.75pt        

\begin{tikzpicture}[x=0.75pt,y=0.75pt,yscale=-1,xscale=1]
%uncomment if require: \path (0,555); %set diagram left start at 0, and has height of 555

%Shape: Circle [id:dp23569796614536864] 
\draw   (47.18,240.41) .. controls (47.18,194.9) and (84.08,158) .. (129.59,158) .. controls (175.1,158) and (212,194.9) .. (212,240.41) .. controls (212,285.92) and (175.1,322.82) .. (129.59,322.82) .. controls (84.08,322.82) and (47.18,285.92) .. (47.18,240.41) -- cycle ;
%Straight Lines [id:da818608601516106] 
\draw    (129.59,240.41) -- (212,240.41) ;
\draw [shift={(212,240.41)}, rotate = 0] [color={rgb, 255:red, 0; green, 0; blue, 0 }  ][fill={rgb, 255:red, 0; green, 0; blue, 0 }  ][line width=0.75]      (0, 0) circle [x radius= 2.01, y radius= 2.01]   ;
\draw [shift={(129.59,240.41)}, rotate = 0] [color={rgb, 255:red, 0; green, 0; blue, 0 }  ][fill={rgb, 255:red, 0; green, 0; blue, 0 }  ][line width=0.75]      (0, 0) circle [x radius= 2.01, y radius= 2.01]   ;
%Straight Lines [id:da8780991663690175] 
\draw    (129.59,240.41) -- (206,209.82) ;
\draw [shift={(206,209.82)}, rotate = 338.18] [color={rgb, 255:red, 0; green, 0; blue, 0 }  ][fill={rgb, 255:red, 0; green, 0; blue, 0 }  ][line width=0.75]      (0, 0) circle [x radius= 2.01, y radius= 2.01]   ;
%Curve Lines [id:da8582338841682985] 
\draw    (151,230.82) .. controls (164,229.82) and (146,245.82) .. (152,238.82) ;
%Curve Lines [id:da4321093819302376] 
\draw    (156,229.82) .. controls (169,228.82) and (151,244.82) .. (157,237.82) ;
%Shape: Circle [id:dp39371215540182525] 
\draw   (347.85,241.08) .. controls (347.85,195.56) and (384.74,158.67) .. (430.26,158.67) .. controls (475.77,158.67) and (512.67,195.56) .. (512.67,241.08) .. controls (512.67,286.59) and (475.77,323.49) .. (430.26,323.49) .. controls (384.74,323.49) and (347.85,286.59) .. (347.85,241.08) -- cycle ;
%Straight Lines [id:da06910283052288535] 
\draw    (430.26,241.08) -- (512.67,241.08) ;
\draw [shift={(512.67,241.08)}, rotate = 0] [color={rgb, 255:red, 0; green, 0; blue, 0 }  ][fill={rgb, 255:red, 0; green, 0; blue, 0 }  ][line width=0.75]      (0, 0) circle [x radius= 2.01, y radius= 2.01]   ;
\draw [shift={(430.26,241.08)}, rotate = 0] [color={rgb, 255:red, 0; green, 0; blue, 0 }  ][fill={rgb, 255:red, 0; green, 0; blue, 0 }  ][line width=0.75]      (0, 0) circle [x radius= 2.01, y radius= 2.01]   ;
%Straight Lines [id:da738296128036381] 
\draw    (430.26,241.08) -- (506.67,210.49) ;
\draw [shift={(506.67,210.49)}, rotate = 338.18] [color={rgb, 255:red, 0; green, 0; blue, 0 }  ][fill={rgb, 255:red, 0; green, 0; blue, 0 }  ][line width=0.75]      (0, 0) circle [x radius= 2.01, y radius= 2.01]   ;
%Curve Lines [id:da6286217788455489] 
\draw    (451.67,231.49) .. controls (464.67,230.49) and (446.67,246.49) .. (452.67,239.49) ;
%Curve Lines [id:da6924953760421295] 
\draw    (456.67,230.49) .. controls (469.67,229.49) and (451.67,245.49) .. (457.67,238.49) ;
%Straight Lines [id:da2300694675518593] 
\draw    (430.26,241.08) -- (489.02,182.75) ;
\draw [shift={(489.02,182.75)}, rotate = 315.22] [color={rgb, 255:red, 0; green, 0; blue, 0 }  ][fill={rgb, 255:red, 0; green, 0; blue, 0 }  ][line width=0.75]      (0, 0) circle [x radius= 2.01, y radius= 2.01]   ;

% Text Node
\draw (140.33,253.07) node [anchor=north west][inner sep=0.75pt]  [font=\scriptsize]  {$\frac{2\pi i}{n+1}$};
% Text Node
\draw (237,233.4) node [anchor=north west][inner sep=0.75pt]    {$e_{1}$};
% Text Node
\draw (113,223.4) node [anchor=north west][inner sep=0.75pt]    {$O$};
% Text Node
\draw (222,181.4) node [anchor=north west][inner sep=0.75pt]  [font=\tiny]  {$\left(\cos\frac{2\pi i}{n+1} ,\sin\frac{2\pi i}{n+1} \right)$};
% Text Node
\draw (436.33,252.4) node [anchor=north west][inner sep=0.75pt]  [font=\scriptsize]  {$\frac{2\pi i}{n+1}$};
% Text Node
\draw (537.67,234.07) node [anchor=north west][inner sep=0.75pt]    {$e_{1}$};
% Text Node
\draw (414.67,225.07) node [anchor=north west][inner sep=0.75pt]    {$O$};
% Text Node
\draw (519.33,206.73) node [anchor=north west][inner sep=0.75pt]  [font=\tiny]  {$\left(\cos\frac{2\pi i}{n+1} ,\sin\frac{2\pi i}{n+1} \right)$};
% Text Node
\draw (115.67,335.4) node [anchor=north west][inner sep=0.75pt]    {$V_{i}^{3}$};
% Text Node
\draw (417,337.07) node [anchor=north west][inner sep=0.75pt]    {$V_{i}^{4}$};
% Text Node
\draw (500,172.07) node [anchor=north west][inner sep=0.75pt]  [font=\tiny]  {$\left(\cos\frac{2\pi i}{n+1} ,\sin\frac{2\pi i}{n+1} \right)$};

\end{tikzpicture}

        \caption{Pattern in $xOy$-plane}
        \label{figpatternin3}
    \end{figure}
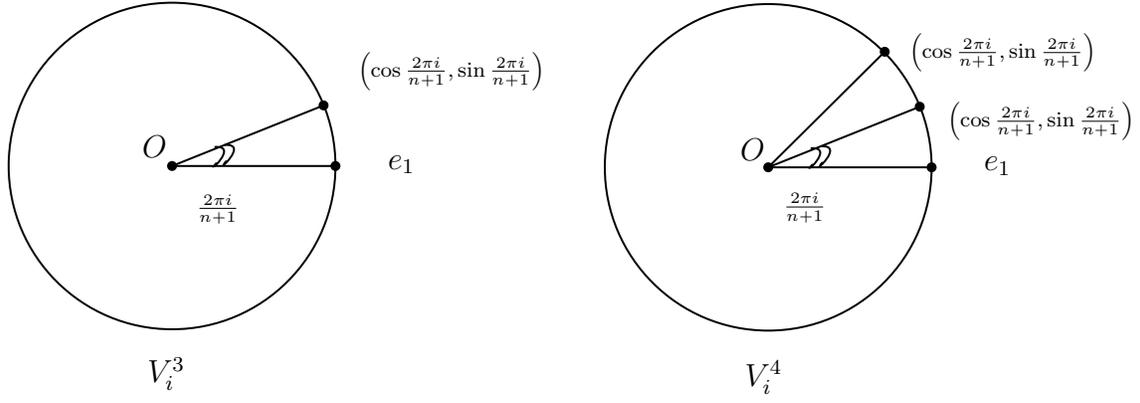
% Let us pause to explain what do these patterns mean. For $k=3$ case, the points (in fact their corresponding angles under the canonical map from the points in the circle to the points of $(0,2\pi)$) on the sphere form a $2$-term arithmetic progression with common difference $\frac{2\pi i}{n+1}$. When $i$ ranges over $\{1,2,...n\}$, the resulting gaps $\{\frac{2\pi i}{n+1}\}_{i=1}^n$ is the set of all possible common differences of 2-terms arithmetic progressions in $\frac{\pi}{n+1}(\mathbb Z/(n+1)\mathbb Z)\cong \mathbb Z/(n+1)\mathbb Z.$ 

% When $k=4,$ the points on the sphere form a $3$-term arithmetic progression with common differences $\frac{2\pi i}{n+1}.$ Similarly, when $i$ ranges over all $\{1,2,...,n\}$ the resulting $\{\frac{2\pi i}{n+1}\}_{i=1}^n$ also form all possible common differences of 3-term arithmetic progressions in $\frac{\pi}{n+1}(\mathbb Z/(n+1)\mathbb Z)$.

    By the definition of upper density and limit superior, equation \eqref{nononocontradiction} means that for any $\eta>0,$ there is $R(\eta,n,i)>0$ such that for all $R\geq R(\eta,n,i)$,
    \begin{equation}\label{shangshi}
        \frac{\mathcal L^1(D_0^{V_i^k}(A)\cap [0,R])}{R}<\frac{\eta}{n}+\frac{\eps(\eps_0,k,d)}{M_{d}},~\forall i=1,2,...,n,
    \end{equation}
 which is equivalent to     \begin{equation}\label{computation2}
       \mathcal L^1(D_0^{V_i^k}(A)\cap [0,R])<\frac{\eta R}{n}+\frac{\eps_0 \eps(\eps_0,k,d)R}{M_{d}},~ \forall i=1,2,...,n.
    \end{equation}

    Denote $D_0^{V_i^k}(A)$ as $D_0^{i,k}(A)$ and define the union of the pinned scaling factor sets as 
    \begin{equation*}
        D_0(n,k):=\bigcup_{i=1}^nD_0^{i,k}(A).
    \end{equation*}
     Then for all $R\geq R(\eta,n):=\max\{R(\eta,n,i):i=1,2,...,n\}$,
    \begin{equation}\label{I1est}
        \begin{aligned}
             \mathcal L^1\left( D_0(n,k)\bigcap [0,R] \right)&= \mathcal L^1\left( \bigcup_{i=1}^nD_0^{i,k}(A)\bigcap [0,R] \right)\\
             &\leq\sum_{i=1}^n\mathcal L^1(D_0^{i,k}(A)\cap [0,R])\\
             &<\eta R+\frac{n\eps(\eps_0,k,d) R}{M_{d}}.
        \end{aligned}
    \end{equation}
    In the last inequality, we apply \eqref{computation2}.
    
    On the other hand, for such $R$, we estimate the nominator of the upper density $\delta(A)$. 
    \begin{equation}\label{nominator}
         \begin{aligned}
       \mathcal L^d \left(A\bigcap B(0,R)\right)&= \int_{0}^R \sigma_r^{d-1}(r\mathbb S^{d-1}\cap A)dr\\
        &=\left(\int_{D_0(n,k)\cap [0,R]}+\int_{ [0,R]\setminus D_0(n,k)}\right)\sigma_r^{d-1}(r\mathbb S^{d-1}\cap A)dr\\
        &=I_1+I_2,
    \end{aligned}   
    \end{equation}
    where in the first line, the area measure $\sigma_r^{d-1}$ on $r\mathbb S^{d-1}$ is defined as 
    \begin{equation}\label{areameas}
       \sigma_r^{d-1}(r\mathbb S^{d-1}\cap A):= \int_{\R/2\pi \mathbb Z} \chi_{r\mathbb S^{d-1}\cap A}(r\cos \theta, r\sin\theta)d\theta r^{d-1}.
    \end{equation}
    % and $\leq$ is because we only consider the dilated copy with $0$ (in the copy) corresponding to $0\in V.$ 

    We first address $I_1$. By \eqref{I1est}, 
    \begin{equation}\label{I1}
    \begin{aligned}
          I_1&=\int_{D_0(n,k)\cap [0,R]}\sigma_r^{d-1}(r\mathbb S^{d-1}\cap A)dr \\
          &\leq \mathcal L^1\left( D_0(n,k)\bigcap [0,R] \right)\cdot \sigma_r^{d-1}(r\mathbb S^{d-1})\\
            &< C_dR^{d-1}\cdot(\eta R+\frac{n\eps(\eps_0,k,d) R}{M_{d}})\\
          &=C_dR^d (\eta +\frac{n\eps(\eps_0,k,d) }{M_{d}}).
    \end{aligned}
    \end{equation}
    For $I_2$, 
    \begin{align*}
        I_2&=\int_{ [0,R]\setminus D_0(n,k)}\sigma_r^{d-1}(r\mathbb S^{d-1}\cap A)dr,
    \end{align*}
we can apply Lemma \ref{szmelemma}. 

Assume $r\not\in D_0(n,k)$. Define Lipschitz map
\begin{align*}
    \varphi:\R/2\pi\mathbb Z&\to \quad r\mathbb S^{d-1}\\
   \theta\quad &\mapsto r(\cos\theta,\sin\theta)
\end{align*}
This is an isomorphism. By \eqref{areameas} and change of variable, the integrand of $I_2$
    \begin{equation}\label{integrand}
        \sigma_r^{d-1}(r\mathbb S^{d-1}\cap A)=r^{d-1}\mathcal L^1(\varphi^{-1}(r\mathbb S^{d-1}\cap A)).
    \end{equation}
It suffices to estimate the right hand side. Denote
\begin{equation*}
    \varphi^{-1}(r\mathbb S^{d-1}\cap A) \overset{\triangle}{=} E_r\subseteq \R/2\pi \mathbb Z.
\end{equation*}
We claim $E_r$ satisfies the condition \eqref{Nok-1AP} of Lemma \ref{szmelemma}. If not, there is $d\in \{1,2,...,n\}$ such that distinct numbers
\begin{equation*}
       \left\{x,x+\frac{2\pi d}{n+1},x+2\cdot\frac{2\pi d}{n+1},...,x+(k-2)\cdot\frac{2\pi d}{n+1}\right\}\subseteq E_r.
   \end{equation*}  
% Then there is $x$ such that the $k-1$ distinct elements
% \begin{equation*}
%     x,x+\frac{2\pi d}{n+1},...,x+\frac{(k-2)2\pi d}{n+1}\in E_r.
% \end{equation*}
This implies their images $S$ under $\varphi$,
\begin{equation*}
  S:=  \left\{r\left(\cos\big(x+j\cdot\frac{2\pi d}{n+1}\big),\sin\big(x+j\cdot\frac{2\pi d}{n+1}\big)\right):j=0,1,...,k-2\right\}\subseteq r\mathbb S^{d-1}\bigcap A.
\end{equation*}
It can be checked that $W=S$ and 
\begin{equation*}
   U= rV_d^k\setminus\{0\}=\left\{re_1,r\left(\cos\frac{2\pi d}{n+1},\sin\frac{2\pi d}{n+1}\right),...,r\left(\cos\frac{(k-2)2\pi d}{n+1},\sin\frac{(k-2)2\pi d}{n+1}\right)\right\}
\end{equation*}
satisfy the condition of Lemma \eqref{xuanzhuan} (switching $U,W$ if necessary). Therefore, there is $O\in \mathcal O(d)$ such that 
\begin{equation*}
    O(U)=W.
\end{equation*}
Combining this with $O(0)=0$, $0\in A$ and $\varphi(E_r)=r\mathbb S^{d-1}\cap A$, we obtain 
\begin{equation}\label{contradiction}
rO(V_d^k)=O(rV_d^k)=S\cup \{0\}\subseteq  A.
\end{equation}
By definition, this implies 
$r\in D_0^{d,k}(A)\subseteq D_0(n,k)$ which contradicts with $r\not\in D_0(n,k)$.

Return to our estimate \eqref{integrand} to the integrand of $I_2$. This means we can apply Lemma \ref{szmelemma} to $\mathcal L^1(E_r)$. Therefore, 
\begin{equation*}
    \sigma_r^{d-1}(r\mathbb S^{d-1}\cap A)=r^{d-1}\mathcal L^1(E_r)\leq R^{d-1}\cdot\begin{cases}
        \frac{2\pi}{n+1}, &k=3,\\
           \frac{2\pi}{(\log\log (n+1))^{c_{k-1}}
           }, & k\geq 4.
    \end{cases}
\end{equation*}
Plug this back to $I_2$
\begin{equation}\label{I_2}
     I_2=\int_{ [0,R]\setminus D_0(n,k)}\sigma_r^{d-1}(r\mathbb S^{d-1}\cap A)dr\leq R^d \cdot\begin{cases}
        \frac{2\pi}{n+1}, &k=3,\\
           \frac{2\pi}{(\log\log (n+1))^{c_{k-1}}
           }, & k\geq 4.
           \end{cases}
\end{equation}

Combining \eqref{I1} and \eqref{I_2}, we obtain for \eqref{nominator}
\begin{equation*}
         \mathcal L^d \left(A\bigcap B(0,R)\right)=I_1+I_2
         \leq C_dR^d (\eta +\frac{n\eps(\eps_0,k,d) }{M_{d}})+R^d \cdot \begin{cases}
        \frac{2\pi}{n+1}, &k=3,\\
           \frac{2\pi}{(\log\log (n+1))^{c_{k-1}}
           }, & k\geq 4,
           \end{cases}
\end{equation*}
which means 
\begin{equation*}
    \frac{\mathcal L^d \left(A\bigcap B(0,R)\right)}{R^d}\leq C_d\left[\eta +\frac{n\eps(\eps_0,k,d) }{M_{d}}+\begin{cases}
        \frac{2\pi}{n+1}, &k=3,\\
           \frac{2\pi}{(\log\log (n+1))^{c_{k-1}}
           }, & k\geq 4,
           \end{cases}\right].
\end{equation*}
In the computations, $C_d$ may change from line to line. The one in the definition of $\eps(\eps_0,k,d)$ is the final $C_d$.

 For $R_i\geq R(\eta,n)$ where $\{R_i\}$ is the subsequence of $R$ that attains the limit superior in $\delta(A)$,
   \begin{equation*}
   \begin{aligned}
        \eps_0&\leq  \delta(A)= \lim_{i\to\infty}\frac{\mathcal L^d\left(A\bigcap B(0,R_i)\right)}{R_i^d} \\
        &\leq C_d\left[ \eta+\frac{n\eps(\eps_0,k,d) }{M_{d}}+\begin{cases}
        \frac{2\pi}{n+1}, &k=3,\\
           \frac{2\pi}{(\log\log (n+1))^{c_{k-1}}
           }, & k\geq 4,
           \end{cases}\right]\\
       &<\frac{\eps_0}{2}<\eps_0,
   \end{aligned}
   \end{equation*}
   if we choose 
   \begin{itemize}
       \item [$k=3$]: 
       $M_{d}=10^{10}\pi C_{d}^2$, $\eta<\frac{\eps_0}{10C_{d}},$ and prime number $n+1=n(\eps_0,k,d)+1\in \big(\frac{20\pi C_d}{\eps_0}, \frac{60\pi C_d}{\eps_0}\big)$. According to Bertrand–Chebyshev theorem, such a prime exists.
       \item [$k\geq 4$]: $M_d=10C_d$, $\eta<\frac{\eps_0}{10C_d},$ and prime number $n+1$ is contained in $$ \left(\frac{1}{3}\cdot \exp\exp\Big(\big({20C_d\pi }{/\eps_0}\big)^{1/{c_{k-1}}}\Big), \exp\exp\Big(\big({20C_d\pi }{/\eps_0}\big)^{1/{c_{k-1}}}\Big)\right).$$
   \end{itemize} 
   This is a contradiction hence concludes the proof for $d=2$.
    
 \subsubsection{Case $d\geq 3$}\label{SpacialCase}
 The method can be generalized to higher dimensions by combining a polar coordinate argument. Still analyzing by contradiction, we redefine $V_i^k$ as 
 \begin{equation*}
        V_i^k=\left\{0,e_1,\left(\cos\frac{2\pi i}{n+1},\sin\frac{2\pi i}{n+1},0\right),...,\left(\cos\frac{(k-2)2\pi i}{n+1},\sin\frac{(k-2)2\pi i}{n+1},0\right)\right\}\subseteq \R^d,~i=1,...,n.
    \end{equation*}
    Proceeding to the analysis of $I_1$ and $I_2$, we need to change the argument of estimating $I_2$ since $\varphi$ is not well-defined when we are in higher dimensions. 
    
    Assume $r\not\in D_0(n,k)$. We apply the repeated polar coordinate or the change of variable formula to $r\mathbb S^{d-1}$. We parametrize the sphere by $r\omega$ where  
    \begin{equation*}
    \omega =\omega(\theta_1,...,\theta_{d-2},\phi)=
\begin{pmatrix}
\cos\theta_1 \\
\sin\theta_1 \cos\theta_2 \\
\sin\theta_1 \sin\theta_2 \cos\theta_3 \\
\vdots \\
\sin\theta_1 \cdots \sin\theta_{d-3} \cos\theta_{d-2} \\
\sin\theta_1 \cdots \sin\theta_{d-3} \sin\theta_{d-2} \cos\phi \\
\sin\theta_1 \cdots \sin\theta_{d-3} \sin\theta_{d-2} \sin\phi
\end{pmatrix}
\in \mathbb S^{d-1},
\end{equation*}
$\theta_1\in(0,2\pi),~\theta_i,\phi\in(0,\pi),~ i=2,3,...,d-2$.

One can check that the Jacobian determinant of this change of variable is 
\begin{equation*}
    r^{d-1}\prod_{j=1}^{d-2} \sin^{d-j-1} \theta_j.
\end{equation*}
This means that under this change of variable, the area $\sigma_r^{d-1}(r\mathbb S^{d-1}\cap A)$ can be written as an integral over angles $\theta_1,...,\theta_{d-2},\phi$, which is 
\begin{equation}\label{cov}
    \begin{aligned}
          \sigma_r^{d-1}(r\mathbb S^{d-1}\cap A)&=\int_{r\mathbb S^{d-1}}\chi_A(\omega')d\sigma_r^{d-1}(\omega')\\
          &=r^{d-1}\int_{\phi=0}^{\pi}\int_{\theta_{d-2}=0}^{\pi}... \int_{\theta_1=0}^{2\pi} \chi_A(r\omega(\theta_1,...\theta_{d-2},\phi)) \left(\prod_{j=1}^{d-2} \sin^{d-j-1} \theta_j\right)
d\theta_1 \cdots d\theta_{d-2} d\phi\\
&\leq r^{d-1}\int_{\phi=0}^{\pi}\int_{\theta_{d-2}=0}^{\pi}... \int_{\theta_1=0}^{2\pi} \chi_A(r\omega(\theta_1,...\theta_{d-2},\phi))
d\theta_1 \cdots d\theta_{d-2} d\phi.
        \end{aligned}
\end{equation}
Note that in equation \eqref{cov}, for any fixed $\boldsymbol{\alpha}=(\theta_2,...,\theta_{d-2},\phi),$ $\{r\omega(\theta_1,\boldsymbol{\alpha}):\theta_1\in (0,2\pi)\}$ forms a circle with radius $r$ contained in $r\mathbb S^{d-1}$. In fact, to prove this, it suffices to show that $\{\omega(\theta_1,\boldsymbol{\alpha}):\theta_1\in (0,2\pi)\}$ forms a unit circle. 

Fix all angles except $\theta_1$, that is, fix $\theta_2, \dots, \theta_{d-2}, \phi$. Define the $(d-1)$-dimensional vector:
\[
\Vec{\beta} = 
\begin{pmatrix}
\cos \theta_2 \\
\sin \theta_2 \cos \theta_3 \\
\vdots \\
\sin \theta_2 \cdots \sin \theta_{d-2} \cos \phi \\
\sin \theta_2 \cdots \sin \theta_{d-2} \sin \phi
\end{pmatrix}
\in \mathbb{R}^{d-1}.
\]
It is directly to check that $\Vec{\beta}$ is a unit vector.
Then the  $\omega(\theta_1,\boldsymbol{\alpha})$ can be rewritten as:

\[
\omega(\theta_1,\boldsymbol{\alpha}) =
\begin{pmatrix}
\cos \theta_1 \\
\sin \theta_1 \cdot \Vec{\beta}
\end{pmatrix}
=
\cos \theta_1 \cdot u + \sin \theta_1 \cdot v,
\]
where
\[
u = 
\begin{pmatrix}
1 \\
0 \\
\vdots \\
0
\end{pmatrix}, \quad
v = 
\begin{pmatrix}
0 \\
\Vec{\beta}
\end{pmatrix}
\in \mathbb{R}^d.
\]

Since $\|u\| = \|v\| = 1$ and $u \perp v$, the trajectory $\omega(\theta_1,\boldsymbol{\alpha})$ lies entirely in the 2-dimensional plane spanned by $u$ and $v$, and moves along the unit circle in that plane.

Hence, as $\theta_1$ varies, the point $\omega(\theta_1,\boldsymbol{\alpha})$ traces out the intersection of this 2-dimensional plane, which passes through the origin with the unit sphere $\mathbb{S}^{d-1}$—that is, a unit circle.

Denote the circle $\{r\omega(\theta_1,\boldsymbol{\alpha}):\theta_1\in (0,2\pi)\}$ as $\mathbf S_r(\boldsymbol{\alpha})$. It corresponds to the inner integral over $\theta_1$ 
\begin{equation}\label{InnerIntOverTHeta1}
    \int_{\theta_1=0}^{2\pi} \chi_A(r\omega(\theta_1,...\theta_{d-2},\phi))
d\theta_1.
\end{equation}

For fixed $\mathbf S_r(\boldsymbol{\alpha})$, our goal is still deducing a contradiction of form \eqref{contradiction} and apply Lemma \ref{szmelemma}. We first apply rotation $O_{\boldsymbol{\alpha}}$, such that $O_{\boldsymbol{\alpha}}(\mathbf S_r(\boldsymbol{\alpha}))=r\mathbb S^{1}\times\{0\}\subseteq \R^2\times \{0\}\subseteq \R^d$ and then repeat our planar argument. Slightly abusing the notation, we redefine 
\begin{align*}
    \varphi:\R/2\pi\mathbb Z&\to \quad r\mathbb S^{1}\times \{0\}\\
   \theta_1\quad &\mapsto r(\cos\theta_1,\sin\theta_1,0),
\end{align*}
and define
\begin{equation*}
 E_r^{\boldsymbol{\alpha}}:=\{\theta_1\in \R/2\pi\mathbb Z: \theta_1\in  \varphi^{-1}\left( O_{\boldsymbol{\alpha}}( A\cap \mathbf S_r(\boldsymbol{\alpha})) \right)\}.
\end{equation*}
Then the inner integral \eqref{InnerIntOverTHeta1} can be rewritten as 
\begin{equation}\label{inner2}
      \int_{\theta_1=0}^{2\pi} \chi_{E_r^{\boldsymbol{\alpha}}}(\theta_1))d\theta_1=\mathcal L^{1}(E_r^{\boldsymbol{\alpha}}).
\end{equation}
Similarly, we claim the set $E_r$ satisfies the condition of Lemma \ref{szmelemma}. If not, the argument is exactly the same as the planar discussion. At last, we can find an orthogonal map $O_{\boldsymbol{\alpha}}^{-1}\circ
O $ that sends certain $rV_d^k$ to $A$, which contradicts $r\not\in D_0(n,k)$. 

% Note that the inverse of this rotation composed with $O\in \mathcal{O}(d)$ in Lemma \ref{xuanzhuan} will be the final orthogonal transformation needed to deduce the contradiction \eqref{contradiction}. 
The claim allows us to apply Lemma \ref{szmelemma} to the integral \eqref{inner2} over $\theta_1$. Therefore, 
\begin{align*}
     \int_{\theta_1=0}^{2\pi} \chi_A(r\omega(\theta_1,...\theta_{d-2},\phi))
d\theta_1&= \int_{\theta_1=0}^{2\pi} \chi_{E_r^{\boldsymbol{\alpha}}}(\theta_1))d\theta_1=\mathcal L^1(E_r^{\boldsymbol{\alpha}})\\
&\leq \begin{cases}
           \frac{2\pi}{n+1}, &k=3,\\
           \frac{2\pi}{(\log\log (n+1))^{c_{k-1}}
           }, & k\geq 4
       \end{cases}.
\end{align*}
Combining it with \eqref{cov}, we obtain
\begin{equation*}
    \text{RHS of }\eqref{cov}\leq \pi^{d-2}R^{d-1} \cdot\begin{cases}
        \frac{2\pi}{n+1}, &k=3,\\
           \frac{2\pi}{(\log\log (n+1))^{c_{k-1}}
           }, & k\geq 4
    \end{cases} =C_dR^{d-1}\cdot\begin{cases}
        \frac{2\pi}{n+1}, &k=3,\\
           \frac{2\pi}{(\log\log (n+1))^{c_{k-1}}
           }, & k\geq 4
    \end{cases}.
\end{equation*}
Plug this back to $I_2$, 
  \begin{align*}
        I_2&=\int_{ [0,R]\setminus D_0(n,k)}\sigma_r^{d-1}(r\mathbb S^{d-1}\cap A)dr\\
        &\leq C_dR^{d}\cdot\begin{cases}
        \frac{2\pi}{n+1}, &k=3,\\
           \frac{2\pi}{(\log\log (n+1))^{c_{k-1}}
           }, & k\geq 4
    \end{cases}.
    \end{align*}
This is the higher dimensional version of our estimate for $I_2$ in \eqref{I_2}. The rest of the proof is identical to the planar case so we omit the details. 

Finally, we conclude the proof for Proposition \ref{MainLemma}.

\section{Further directions}
We discuss possible further directions in this section.

\begin{remark}
    In the proof, what we essentially work with is the scaling factors associated to the dilated pattern of $V$ where $x$ is fixed as $0\in V$. We do not know if other types of change of variable can be used to test other non-isosceles patterns.
\end{remark}

\begin{remark}
  If we denote 
  \begin{equation*}
    m:=\text{span} (V),
\end{equation*}
then the pattern we found satisfies $m=2$. One can also consider higher dimensional patterns which correspond to more complicated avoidance problems. For example, if we assume $m=3$, one possible pattern we can consider is ``equilateral triangle" on $\mathbb S^{d-1}.$ In the last integration we may leave $\theta_{1}$ and $\phi$ as variables and ask: If $E\subseteq \R/\mathbb Z\times \R/\mathbb Z$ avoids all equilateral triangle with side-length $i/n.$ What is the quantitative upper bound (depending on $n$) of $\mathcal L^2(E)$? In this case, results in \cite{Jaber_2025_AvoidTriangleInZ/NZ} may be applied.
\end{remark}

\appendix
\section{Szemer\'edi's theorem}\label{Appendix:Szemeredi's theorem}
We record the following prerequisites related to Szemer\'edi's theorem. 
\begin{definition}[$m$ term arithmetic progression, $m$-A.P.]\label{m-ap}For $m\geq 3$ and $N\gg m\geq 3,$ a sequence of $m$ elements ${a_1},{a_2},...,{a_m}\in \mathbb Z/N\mathbb Z$ is called an \textit{$m$ term arithmetic progression} (\textit{$m$-A.P.}) with common difference $d$ if 
\begin{itemize}
    \item ${a_{i+1}}-{a_i}=d\mod{N}$, where $d \in \{1,2,...,N-1\}$ for all $i$.
    \item ${a_i}\neq {a_j}$, if $i\neq j.$
\end{itemize}
For example, the common difference $d$ of 3-A.P. $6,1,3$ in $\mathbb Z/7\mathbb Z$ is $2$. We require the common difference is a number between $1,2,...,N-1$.
\end{definition}

Interestingly, such pattern existence or abundance problem can be linked to Szemer\'edi's theorem in avoidance problem. For our purpose, the following quantitative version is needed. 
\begin{theoremC}[Gowers \cite{Gowers_2001_SzemerediGowerNorm}]
    Define 
\begin{equation*}
    r_m(\mathbb Z/N\mathbb Z):=\\ \text{the cardinality of maximal subsets of $\mathbb Z/N\mathbb Z$ without $m$-A.P.}.
\end{equation*} 
Then 
\begin{equation}\label{QuantitativeSzemeredi}
     r_m(\mathbb Z/N\mathbb Z)\leq \frac{N}{(\log\log N)^{c_m}},\quad \text{ where }c_m=1/2^{2^{m+9}}. 
\end{equation}
\end{theoremC}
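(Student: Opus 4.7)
The plan is to follow Gowers' density-increment strategy on $\mathbb{Z}/N\mathbb{Z}$, driven by the Gowers uniformity norms $U^{s}$. Let $A\subseteq \mathbb{Z}/N\mathbb{Z}$ have density $\delta = |A|/N$ and contain no nontrivial $m$-A.P.; set the balanced function $f = \chi_A - \delta$. The aim is to locate a long sub-arithmetic-progression on which the density of $A$ exceeds $\delta$ by an amount polynomial in $\delta$, then iterate.

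First I would prove a generalized von Neumann inequality: the normalized count of $m$-A.P.'s in $A$ differs from its random main term $\delta^{m}$ by at most a constant multiple of $\|f\|_{U^{m-1}}$. This is obtained by repeated Cauchy-Schwarz along the shifts that build up the $U^{m-1}$ norm. Since by hypothesis $A$ has only trivial $m$-A.P.'s, the count is $O(1/N)$, which is dominated by $\delta^{m}$, forcing $\|f\|_{U^{m-1}}\geq c\,\delta^{C}$ with constants depending only on $m$.

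Second, and this is the crux, I would invoke Gowers' inverse theorem for $U^{m-1}$: if $\|f\|_{U^{m-1}}\geq \eta$, then $f$ correlates on some long arithmetic progression with a polynomial phase $e^{2\pi i p(x)}$ of degree at most $m-2$, at level polynomial in $\eta$. A Weyl-type equidistribution argument then lets me partition that progression into shorter sub-A.P.'s on which $p$ is almost linear, yielding a classical $L^{\infty}$ density increment: there exist $\kappa=\kappa(m,\delta)>0$ and a sub-A.P.\ $P$ of length $N'\geq N^{\kappa}$ with $|A\cap P|/|P|\geq \delta + c'\,\delta^{C'}$.

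Finally I would iterate. Each step boosts density by $c'\delta^{C'}$ while replacing $N$ by roughly $N^{\kappa}$; since the density can never exceed $1$, at most $O(\delta^{-C'})$ iterations occur, and unfolding the resulting recursion gives $\delta \lesssim 1/(\log\log N)^{c_m}$ with $c_m = 1/2^{2^{m+9}}$. The main obstacle, and the place where all of the tower-type loss is incurred, is the inverse theorem in step two: extracting a polynomial-phase structure from a large $U^{m-1}$ norm requires an intricate inductive argument combining Balog-Szemer\'edi-Gowers sumset estimates, Freiman-Ruzsa-type additive structure extraction, and Weyl differencing to convert the generalized degree-$(m-2)$ phases that appear naturally into ordinary polynomial phases. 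Controlling the quantitative losses through this chain is precisely what produces the double-exponential exponent in $c_m$.
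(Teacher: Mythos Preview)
The paper does not prove Theorem~C at all: it is quoted as a black box from Gowers' work, with a pointer to Tao--Vu \cite[Proposition 11.12]{Tao_Vu_2006_AddCombi} for the precise statement in $\mathbb{Z}/N\mathbb{Z}$. There is therefore nothing in the paper to compare your proposal against. Your outline is a faithful high-level sketch of Gowers' original density-increment argument (generalized von Neumann, inverse theorem for $U^{m-1}$, and iteration), and it correctly identifies the inverse step as the source of the tower-type constant $c_m$; but since the paper treats this result as an external input rather than something to be reproved, a full proof here is neither expected nor supplied.
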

This result does not appear explicitly in Gowers' original paper \cite{Gowers_2001_SzemerediGowerNorm}, whereas his method with Gowers' norm works well for more general groups. One can find equation \eqref{QuantitativeSzemeredi} in Tao and Vu's book \cite[Proposition 11.12]{Tao_Vu_2006_AddCombi}. More recent results about Szemer\'edi's theorem such as \cite{Obryant_2011_SzemerediTheoremOneKase,Bloom_Sisask_2023_SemerediTheoremWithCasek=3,Green_Tao_2009_SemerediTheoremWithCasek=4II,Green_Tao_2017_SemerediTheoremWithCasek=4III,Leng_Sah_Sawhney_2024_SemerediTheoremAKase} can be applied and a tiny improvement in the quantitative bounds in Theorem \ref{MainTheorem} can be obtained. We do not do this for computational simplicity. 

If we also define \textit{$2$-A.P.} to be an ordered pair $(a,b)\in \mathbb Z/N\mathbb Z\times \mathbb Z/N\mathbb Z$, $\bar a\neq \bar b$, then trivially,  
\begin{equation*}
    r_2(\mathbb Z/N\mathbb Z)\leq 1.
\end{equation*}
Combining this with Gowers' result \eqref{QuantitativeSzemeredi}, we have 
\begin{theoremD}[Szemer\'edi's Theorem]\label{SzemerediTheorem}
\begin{equation}
    r_m(\mathbb Z/N\mathbb Z)\leq \begin{cases}
        \frac{N}{(\log\log N)^{c_m}}, & m\geq 3,\\
        1, &m=2.
    \end{cases}
\end{equation}
\end{theoremD}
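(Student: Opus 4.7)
The final statement (Theorem D) is a packaging result that merges the quantitative Szemer\'edi bound for $m \geq 3$ with a trivial base case at $m=2$. No genuinely new work is required; the plan is a case split that invokes the preceding material directly.

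First, for $m \geq 3$, I would simply cite Theorem C (Gowers): the bound
\[
r_m(\mathbb Z/N\mathbb Z) \leq \frac{N}{(\log\log N)^{c_m}}, \quad c_m = 1/2^{2^{m+9}},
\]
is stated there verbatim, so this half of the case split is immediate with no further estimation.

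Second, for $m=2$, I would unfold the definition of a $2$-A.P.\ given just before the statement: an ordered pair $(a,b) \in \mathbb Z/N\mathbb Z \times \mathbb Z/N\mathbb Z$ with $\bar a \neq \bar b$. Under this convention, any subset $S \subseteq \mathbb Z/N\mathbb Z$ with $|S| \geq 2$ automatically contains a $2$-A.P., namely any ordered pair of two distinct elements of $S$. Hence a $2$-A.P.-free subset has at most one element, giving $r_2(\mathbb Z/N\mathbb Z) \leq 1$ as the paper already notes.

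Combining the two bounds yields the piecewise statement of Theorem D. There is no genuine obstacle here: the content of the theorem lies entirely in Theorem C, and the $m=2$ clause is only included so that Lemma \ref{szmelemma} (which distinguishes the $k=3$ and $k\geq 4$ regimes via the $m=k-1$ case) can cite a single uniform statement rather than splitting into Gowers plus a footnote. Thus the only thing to be careful about in writing this out is matching the convention for $2$-A.P.s with the convention in force throughout the rest of the appendix.
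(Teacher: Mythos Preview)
Your proposal is correct and matches the paper's own treatment exactly: the paper simply cites Theorem~C for $m\geq 3$ and notes that the $m=2$ bound is trivial from the stated definition of a $2$-A.P., then packages the two cases into Theorem~D. There is nothing more to add.
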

We will apply Theorem \hyperref[SzemerediTheorem]{D} to prove Theorem \ref{MainTheorem}.

\bibliographystyle{plain}
\bibliography{References_of_CJW_20250903.bib}

\vspace{1em}
\noindent \textsc{Department of Mathematics, 1984 Mathematics Road, The University of British Columbia Vancouver, BC, Canada, V6T 1Z2}. \\
\textit{Email address}: \texttt{chjwang@math.ubc.ca}

\end{document}